\newtheorem{thm}{Theorem}[section]
\newtheorem{lem}{Lemma}[section]
\newtheorem{false statement}{False statement}
\theoremstyle{definition}
\newtheorem{definition}{Definition}
\newtheorem{problem}{Problem}
\newcounter{mathitem}
\begin{document}

\title{\bf The stability method, eigenvalues and cycles of consecutive lengths}

\date{}

\author{Binlong Li\thanks{Department of Applied Mathematics,
Northwestern Polytechnical University, Xi'an, Shaanxi 710072,
P.R.~China. Email: binlongli@nwpu.edu.cn. Partially supported
by the NSFC grant (No.\ 11601429).}~~~ Bo
Ning\thanks{Corresponding author. College of Computer Science, Nankai University, Tianjin 300071, P.R.
China. Email: bo.ning@nankai.edu.cn. Partially supported
by the NSFC grant (No.\ 11601379).}}
\maketitle

\begin{center}
\begin{minipage}{140mm}
\small\noindent{\bf Abstract:}
Woodall proved that for a graph $G$ of order $n\geq 2k+3$ where $k\geq 0$ is an integer,
if $e(G)\geq \binom{n-k-1}{2}+\binom{k+2}{2}+1$ then $G$ contains
a $C_{\ell}$ for each $\ell\in [3,n-k]$. In this article, we prove a stability
result of this theorem. As a byproduct, we give complete solutions to two problems in \cite{GN19}.
Our second part is devoted to an open problem by Nikiforov:
what is the maximum $C$ such that for all positive $\varepsilon<C$ and sufficiently large $n$, every
graph $G$ of order $n$ with spectral radius $\rho(G)>\sqrt{\lfloor\frac{n^2}{4}\rfloor}$
contains a cycle of length $\ell$ for every $\ell\leq (C-\varepsilon)n$.
We prove that $C\geq\frac{1}{4}$ by a method different from previous ones,
improving the existing bounds. We also derive
an Erd\H{o}s-Gallai type edge number condition for even cycles, which may be of independent
interest.

\smallskip
\noindent{\bf Keywords: stability method; large cycles; spectral radius; signless Laplacian spectral radius; cycles
of consecutive lengths; spectral inequality}

\smallskip
\noindent{\bf AMS classification: 05C50; 05C35}
\end{minipage}
\end{center}

%---------------------------------
\section{Introduction}
In 1970s, Erd\H{o}s \cite{E71} asked how many edges are needed in a graph on $n$ vertices,
in order to ensure the existence of a cycle of length exactly $n-r$?
Woodall \cite{W76} determined the Tur\'an numbers
of large cycles $C_{\ell}$ for $\ell\in [\lfloor\frac{n+3}{2}\rfloor,n]$
as follows.

\begin{thm}[Woodall \cite{W76}]\label{Thm:BW}
Let $G$ be a graph of order $n\geq 2k+3$ where $k\geq 0$ is an integer.
If $e(G)\geq \binom{n-k-1}{2}+\binom{k+2}{2}+1$, then $G$ contains
a $C_{\ell}$ for each $l\in [3,n-k]$.
\end{thm}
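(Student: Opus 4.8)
\medskip
\noindent The plan is to prove Theorem~\ref{Thm:BW} by induction on $n$, with $k$ allowed to vary and small cases checked directly. For the inductive step I would distinguish the two exhaustive possibilities $\delta(G)\le k+1$ and $\delta(G)\ge k+2$; the first is treated by a vertex deletion, and the second --- which in particular covers every instance with $k=0$, since then $e(G)\ge\binom{n-1}{2}+2$ already forces $\delta(G)\ge 2$ --- by a circumference bound followed by a chord-rotation argument.

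Suppose first that $\delta(G)\le k+1$, so necessarily $k\ge 1$, and delete a vertex $v$ with $\deg(v)=\delta(G)$. Then $G-v$ has $n-1$ vertices, and using the Pascal identity $\binom{k+2}{2}=\binom{k+1}{2}+(k+1)$,
$$e(G-v)\ \ge\ e(G)-(k+1)\ \ge\ \binom{n-k-1}{2}+\binom{k+1}{2}+1\ =\ \binom{(n-1)-(k-1)-1}{2}+\binom{(k-1)+2}{2}+1 .$$
As also $n-1\ge 2(k-1)+3$, the inductive hypothesis applied to $G-v$ with parameter $k-1$ yields a $C_\ell\subseteq G-v$ for every $\ell\in[3,(n-1)-(k-1)]=[3,n-k]$, exactly as needed.

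Suppose now that $\delta(G)\ge k+2$. I would first verify that $G$ is $2$-connected: if $G$ were disconnected, or had a cut vertex $x$, then $V(G)$ (respectively $V(G)\setminus\{x\}$) would decompose into parts each of which contains all the neighbours of each of its own vertices, so each part would have at least $k+3$ vertices, and a short binomial computation then contradicts $e(G)\ge\binom{n-k-1}{2}+\binom{k+2}{2}+1$ whenever $n\ge 2k+3$. Next I would invoke Kopylov's sharpening of the Erd\H{o}s--Gallai circumference theorem for the $2$-connected graph $G$: if its circumference were at most $n-k-1$, then $e(G)$ would be at most the corresponding Kopylov value, which a routine estimate (again using $n\ge 2k+3$) places strictly below $\binom{n-k-1}{2}+\binom{k+2}{2}+1$. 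So $G$ has a longest cycle $C=v_1v_2\cdots v_{\ell_0}v_1$ with $\ell_0\ge n-k$.

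It then remains to recover cycles of all shorter lengths, and this is the step I expect to be the main obstacle. The hypothesis is equivalent to $\overline{G}$ having at most $(k+1)(n-k-2)-1$ edges. For a fixed cyclic ``offset'' $t$, the pairs of vertices of $C$ at distance $t$ form a family of $\ell_0\ge n-k$ potential non-edges, and these families are pairwise disjoint for distinct $t$, so at most $k+1$ offsets can consist entirely of non-edges of $G$; since a chord of $C$ at offset $t$ splits $C$ into cycles of lengths $t+1$ and $\ell_0-t+1$, this step alone already produces $C_\ell$ for all $\ell\in[3,\ell_0]$ outside a set of at most $2k+2$ exceptional lengths. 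For an exceptional $\ell^\ast$ I would combine two nested, vertex-disjoint chords at unsaturated offsets $t_1,t_2$ with $t_1+t_2=\ell^\ast-2$ (a two-chord rotation, in the spirit of Bondy's proof that a dense Hamiltonian graph is pancyclic); the smallest lengths, and the lengths closest to $\ell_0$, need ad hoc variants --- for example $C_3$ is always present because $e(G)>\lfloor n^2/4\rfloor$ precludes triangle-freeness, and more generally a short cycle is forced as soon as $n$ exceeds its Tur\'an number --- while if some exceptional length resisted all such moves, the resulting rigidity of the chord pattern of $C$ would force $\overline{G}$ past its edge budget. Altogether one obtains $C_\ell$ for every $\ell\in[3,\ell_0]\supseteq[3,n-k]$, the at most $k$ vertices off $C$ being irrelevant for these lengths. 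The real work here is to keep the bookkeeping --- how many lengths the single-chord step misses, how many offsets each missed length forces to be chordless, and which two-chord move recovers it --- uniform in $k$ and valid for the small lengths as well as the large; by contrast the deletion case together with the connectivity and circumference estimates are routine consequences of Pascal's identity and the inequality $n\ge 2k+3$.
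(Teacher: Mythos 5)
Your reduction steps are essentially sound: the vertex-deletion induction for $\delta(G)\le k+1$ is correct (and the observation that $k=0$ forces $\delta\ge 2$ is right), and the cut-vertex counting plus Kopylov's theorem does force $c(G)\ge n-k$ for most parameters --- though not quite ``routinely'': Kopylov needs $n-k\ge 5$, and the comparison is not strict everywhere (at $(k,n)=(3,9)$ one has $f(9,6,3)=\binom{3}{2}+3\cdot 6=21$, exactly your edge hypothesis $2\binom{5}{2}+1$, so that case must also go into the directly-checked pile, e.g.\ via Dirac since there $\delta\ge k+2>n/2$). The genuine gap is the final step, which is the actual content of Theorem~\ref{Thm:BW}: having a cycle $C$ of length $\ell_0\ge n-k$, you still owe cycles of \emph{every} length in $[3,n-k]$, and this part is only sketched. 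The single-offset count is fine (at most $k+1$ offsets of $C$ can be entirely non-edges, so at most roughly $2k+2$ lengths are missed), but the proposed recovery move is not: two vertex-disjoint non-crossing chords at offsets $t_1,t_2$ yield, together with $C$, a cycle of length $\ell_0-t_1-t_2+2$ (a cycle through both chords must use two \emph{opposite} arcs, and for disjoint spanned arcs these are the two unspanned ones); nested chords give length $t_1-t_2+2$; crossing chords give lengths governed by the four sub-arcs rather than by the offsets. So the recipe ``choose $t_1+t_2=\ell^\ast-2$'' does not produce $C_{\ell^\ast}$, and the fallback claim that a resistant length would force $\overline{G}$ past its budget of $(k+1)(n-k-2)-1$ non-edges is a hope, not an argument --- note the hypothesis exceeds the extremal graph $\Gamma$ by a single edge, so any such bookkeeping has to be exactly sharp. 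As written, the heart of the theorem is unproved.

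For comparison: the paper does not prove Theorem~\ref{Thm:BW} (it cites Woodall), but its proofs of Theorems~\ref{Thm:RefinedBW} and \ref{Thm:LiNing-stability} show the clean way to finish, and it is exactly the step you are missing. Bondy's lemma (Lemma~\ref{Lem:Bondy}) says that if $c=c(G)$ and $e(G)>\frac{c(2n-c)}{4}$ then $G$ is weakly pancyclic with girth $3$. Under your hypothesis, for fixed $n$ the quantity $\binom{n-k-1}{2}+\binom{k+2}{2}$ is decreasing in $k$ on the admissible range, so its minimum occurs at $n=2k+3$ or $n=2k+4$, where one checks $\binom{n-k-1}{2}+\binom{k+2}{2}+1=\lfloor n^2/4\rfloor+1$; hence $e(G)\ge\lfloor n^2/4\rfloor+1>\frac{c(2n-c)}{4}$ for every possible $c\le n$, and weak pancyclicity with girth $3$ is immediate. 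The only thing left is then the circumference bound $c(G)\ge n-k$, which your induction-plus-Kopylov skeleton supplies for all $n\ge 2k+3$ (the paper instead uses the Bondy--Chv\'atal closure together with Lemma~\ref{Lem:LiNing}, which needs $n$ large compared with $k$). So the routine parts of your plan are fine; replacing the chord-rotation sketch by Lemma~\ref{Lem:Bondy} would turn it into a complete proof, whereas carrying out the two-chord bookkeeping uniformly in $k$ remains an unclosed (and, as formulated, incorrectly set up) gap.
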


Define $\Gamma$ as a graph which consists of a clique of $n-k-1$ vertices and a clique of $k+2$
vertices sharing one common vertex. The graph $\Gamma$ shows Woodall's theorem is sharp.

In this paper, we shall first consider stability results of Woodall's
theorem following the recent trend.
So it is natural to
recall history of the related stability results of extremal results on cycles.

For non-hamiltonian graphs of order $n$ with given minimum degree,
Erd\H{o}s \cite{E62} proved the following result in 1962.
\begin{thm}[Erd\H{o}s \cite{E62}]
Let $G$ be a graph on $n$ vertices with $\delta(G)\geq k$ where $1\leq k\leq \lfloor\frac{n-1}{2}\rfloor$.
If $G$ is non-hamiltonian then
$$
e(G)\leq\max\left\{\binom{n-k}{2}+k^2,\binom{n-\lfloor\frac{n-1}{2}\rfloor}{2}+{\left\lfloor\frac{n-1}{2}\right\rfloor}^2\right\}.
$$
\end{thm}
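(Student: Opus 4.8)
The plan is to deduce the bound from Chv\'atal's degree-sequence obstruction to Hamiltonicity, followed by a one-variable convexity estimate. Let $d_1\le d_2\le\cdots\le d_n$ be the degree sequence of $G$. Since $G$ is non-hamiltonian and $n\ge 2k+1\ge 3$, Chv\'atal's theorem (the failure of his sufficient condition) supplies an integer $i$ with $1\le i<n/2$, $d_i\le i$, and $d_{n-i}\le n-i-1$. If one prefers not to quote Chv\'atal, this is exactly the point at which one runs the classical P\'osa--Bondy rotation argument: replace $G$ by an edge-maximal non-hamiltonian supergraph (this only increases degrees, so the hypothesis $\delta\ge k$ is preserved and it suffices to bound the edges of the larger graph), observe that such a graph is not $K_n$ and hence carries a Hamilton path, and then rotate along that path to manufacture many pairs of non-adjacent vertices of small degree sum, which package into the two inequalities above. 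This rotation step is the sole technical ingredient; everything downstream is bookkeeping.

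Granting such an $i$, first note its range. From $d_i\le i$ and $d_1=\delta(G)\ge k$ we get $i\ge d_i\ge d_1\ge k$, while $i<n/2$ gives $i\le\lceil n/2\rceil-1=\lfloor\tfrac{n-1}{2}\rfloor$; hence $k\le i\le\lfloor\tfrac{n-1}{2}\rfloor$. Now estimate $e(G)=\tfrac12\sum_{j=1}^n d_j$ by splitting the sorted degree sequence into three blocks dictated by the two Chv\'atal inequalities: the $i$ smallest degrees are each at most $d_i\le i$; the following $n-2i$ degrees are each at most $d_{n-i}\le n-i-1$ (here $n-2i\ge 1$ because $i<n/2$, so this middle block is nonempty and the three blocks genuinely partition $\{1,\dots,n\}$); and the top $i$ degrees are at most $n-1$ trivially. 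Therefore
\[
2\,e(G)\ \le\ i\cdot i+(n-2i)(n-i-1)+i(n-1),
\]
and expanding the right-hand side collapses it to $(n-i)(n-i-1)+2i^2$, i.e.
\[
e(G)\ \le\ \binom{n-i}{2}+i^2.
\]

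Finally, close with convexity. Set $g(x)=\binom{n-x}{2}+x^2=\tfrac12(n-x)(n-x-1)+x^2$, so that $g''(x)=3>0$; being strictly convex, $g$ attains its maximum over any interval at an endpoint. Since $k\le i\le\lfloor\tfrac{n-1}{2}\rfloor$, this yields
\[
e(G)\ \le\ g(i)\ \le\ \max\Bigl\{\,g(k),\ g\bigl(\lfloor\tfrac{n-1}{2}\rfloor\bigr)\,\Bigr\}
=\max\!\left\{\binom{n-k}{2}+k^2,\ \binom{n-\lfloor\frac{n-1}{2}\rfloor}{2}+\left\lfloor\tfrac{n-1}{2}\right\rfloor^{2}\right\},
\]
which is the claimed inequality. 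No slack is wasted: the graphs $K_{k}\vee(\overline{K_{k}}\cup K_{n-2k})$ and $K_{t}\vee(\overline{K_{t}}\cup K_{n-2t})$ with $t=\lfloor\tfrac{n-1}{2}\rfloor$ are non-hamiltonian, have minimum degree $k$ and $t$ respectively, and realize the two terms of the maximum. The only obstacle in the whole argument is the rotation step producing Chv\'atal's index $i$ (equivalently, invoking Chv\'atal's theorem as a black box); the block-counting identity and the convexity of $g$ are purely mechanical.
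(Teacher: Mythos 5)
Your proof is correct, but note that the paper itself offers no proof of this statement: it quotes Erd\H{o}s's 1962 theorem with the citation \cite{E62} purely as background for the stability results, so there is no in-paper argument to compare against. Your route --- extract from non-hamiltonicity a Chv\'atal index $i<n/2$ with $d_i\le i$ and $d_{n-i}\le n-i-1$, deduce $k\le i\le\lfloor\frac{n-1}{2}\rfloor$ from $\delta(G)\ge k$, bound $2e(G)\le i\cdot i+(n-2i)(n-i-1)+i(n-1)$ by the three-block split of the sorted degree sequence, simplify this to $2\bigl(\binom{n-i}{2}+i^2\bigr)$, and finish by convexity of $g(x)=\binom{n-x}{2}+x^2$ --- is the standard modern derivation, and every step checks out: the algebraic identity is right, $g''\equiv 3>0$, and both endpoints $k$ and $\lfloor\frac{n-1}{2}\rfloor$ are integers, so the reduction to the two terms of the maximum is legitimate; the condition $1\le k\le\lfloor\frac{n-1}{2}\rfloor$ also guarantees $n\ge 3$, as Chv\'atal's theorem requires. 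The approach is anachronistic relative to Erd\H{o}s's original argument, which predates Chv\'atal's theorem and proceeds by a direct counting argument; invoking Chv\'atal as a black box buys brevity at the price of importing a theorem stronger than the one being proved, which is a fair trade for a known classical result. Your parenthetical sketch of how to avoid Chv\'atal (edge-maximal counterexample plus rotations, which ``package into the two inequalities above'') is only a sketch and would need real work to complete, but nothing in your written proof depends on it. The closing sharpness remark is also fine: $K_k\vee(kK_1+K_{n-2k})$ and $K_t\vee(tK_1+K_{n-2t})$ with $t=\lfloor\frac{n-1}{2}\rfloor$ are non-hamiltonian, meet the minimum-degree hypothesis, and attain the two terms of the maximum.
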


As a key lemma to attack the following problem: Among all non-hamiltonian
graphs of order $n$ which have minimum degree
at least $k$, characterize the class of graphs which attain the maximum
spectral radius, the authors \cite{LN16} proved a stability result of Erd\H{o}s' theorem.
This result was also proved by F\"{u}redi, Kostochka and Luo \cite{FKL17}, independently.
%For more stability results on non-hamiltonian graphs, see \cite{FKL18,FKL19}.

\begin{thm}[Li and Ning \cite{LN16}, F\"{u}redi, Kostochka and Luo \cite{FKL17}]
Let $G$ be a graph of order $n\geq 6k+5$. If $\delta(G)\geq k\geq 1$
and
$$
e(G)>\binom{n-k-1}{2}+(k+1)^2,
$$
then $G$ is hamiltonian, unless $G$ is a subgraph of $K_k\vee (kK_1+K_{n-2k})$
or a subgraph of $K_{1}\vee (K_{n-k-1}+K_{k})$.
\end{thm}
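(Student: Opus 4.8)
I would prove the contrapositive: assume $G$ has order $n\ge 6k+5$, satisfies $\delta(G)\ge k\ge 1$ and $e(G)>\binom{n-k-1}{2}+(k+1)^2$, and is non-hamiltonian, and deduce that $G$ is a subgraph of $K_k\vee(kK_1+K_{n-2k})$ or of $K_1\vee(K_{n-k-1}+K_k)$. The first move is to reduce to $2$-connected graphs. If $G$ is disconnected, then $\delta(G)\ge k$ forces every component to have at least $k+1$ vertices, so $e(G)\le\binom{n-k-1}{2}+\binom{k+1}{2}$, which is below the threshold because $\binom{k+1}{2}<(k+1)^2$. If $G$ has a cut vertex $v$, then each component of $G-v$ has at least $k$ vertices, so $G$ is a spanning subgraph of $K_1\vee(K_{a_1}+\cdots+K_{a_t})$ for some $t\ge 2$ with all $a_i\ge k$ and $a_1+\cdots+a_t=n-1$; an elementary edge count (convexity in the $a_i$, together with $n\ge 6k+5$) shows that $e(G)>\binom{n-k-1}{2}+(k+1)^2$ is impossible unless $t=2$ and $\{a_1,a_2\}=\{n-k-1,k\}$, in which case $G\subseteq K_1\vee(K_{n-k-1}+K_k)$. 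So from now on $G$ is $2$-connected.

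The engine for the $2$-connected case is the longest-cycle analysis of Erd\H{o}s--Gallai and Kopylov. Let $C$ be a longest cycle of $G$; as $G$ is non-hamiltonian, $c:=|C|\le n-1$ and there is a vertex off $C$. Fix a component $H$ of $G-C$ and put $S:=N_C(H)$; $2$-connectivity gives $|S|\ge 2$, and maximality of $C$ yields the standard rotation facts: consecutive vertices of $S$ are non-adjacent along $C$, and, in a fixed orientation of $C$, the set $S^{+}$ of successors of the vertices of $S$ is independent. When $H$ is a single vertex $x$ one has $N_G(x)=S$, hence $|S|\ge k$, and $\{x\}\cup S^{+}$ is an independent set of size at least $k+1$ with $c\ge 2|S|$; the edge-maximizing case turns out to be exactly ``$G-C$ is one vertex'', matching the fact that the target graph $K_k\vee(kK_1+K_{n-2k})$ has circumference exactly $n-1$.

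To upgrade this to the exact containment I would use Erd\H{o}s's theorem above, which (as $n\ge 6k+5$ makes its first term dominate) gives $e(G)\le\binom{n-k}{2}+k^2$; rewriting the hypothesis as $e(G)>\binom{n-k}{2}+k^2-(n-3k-2)$ puts $e(G)$ in a band of width less than $n-3k-2$ just below the extremal value $\binom{n-k}{2}+k^2$, attained in this regime uniquely by $H_{n,n,k}:=K_k\vee(kK_1+K_{n-2k})$, the target graph. The stability claim to establish is that this small surplus forces $G\subseteq H_{n,n,k}$; I would prove it via Kopylov's disintegration — iteratively deleting vertices of small degree to expose a dense $2$-connected core to which Kopylov's circumference bound $h(n,c,a):=\binom{c-a}{2}+a(n-c+a)$ applies, with extremal graphs $K_a\vee\bigl(K_{c-2a}+(n-c+a)K_1\bigr)$, and with the constraint $\delta(G)\ge k$ pinning the Kopylov parameter to $a=k$ — and then tracking the near-equality cases to conclude that $c=n-1$, that $G-C$ is a single vertex, and that every non-edge of $G$ lies inside the resulting independent $k$-set or between it and a bounded set. (For $k=1$ the bound already gives $e(G)\le\binom{n-2}{2}+4$, so the $2$-connected case is then vacuous.) The hypothesis $n\ge 6k+5$ enters once more here: it is exactly what forces the competing balanced extremal graph $K_{\lfloor n/2\rfloor}\vee(\cdots)$ to have at most $\binom{n-k-1}{2}+(k+1)^2$ edges (with equality at $n=6k+5$), so that it cannot be a counterexample.

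The main obstacle is precisely this last passage. The rotation/disintegration method only delivers an upper bound of the shape $e(G)\le\max_{c,a}h(n,c,a)$, whereas the theorem demands that a graph with circumference at most $n-1$ whose edge count falls in the thin band of width $n-3k-2$ below the extremal value must \emph{literally embed} in one prescribed graph. Carrying this out requires tracking equality in every inequality used; the delicate points are excluding the scenarios ``$G-C$ has several small components'' and ``$G$ has many scattered vertices of small degree'', each handled by an edge estimate calibrated to $n\ge 6k+5$, and then checking that the near-extremal structure produced is genuinely a subgraph of $K_k\vee(kK_1+K_{n-2k})$ rather than merely a graph of comparable size. The low-connectivity reduction and the concluding bookkeeping, by contrast, are routine.
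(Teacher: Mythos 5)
This theorem is only quoted in the present paper (it is the result of \cite{LN16} and \cite{FKL17}); the paper contains no proof of it, so your sketch can only be judged on its own terms and against the route of \cite{LN16}, which is the one the paper's Section~\ref{Sec:Woodall} machinery reflects. Your low-connectivity reductions are correct and routine: disconnectedness is killed by the edge count since all components have at least $k+1$ vertices, and the cut-vertex case, by convexity over the parts, either violates the edge bound or lands in $K_1\vee(K_{n-k-1}+K_k)$. The genuine gap is exactly where you flag it, and it is not closed by what you propose. Erd\H{o}s's theorem supplies only the numerical bound $\binom{n-k}{2}+k^2$; passing from ``$e(G)$ lies within $n-3k-2$ of this bound'' to the exact containment $G\subseteq K_k\vee(kK_1+K_{n-2k})$ \emph{is} the content of the theorem, and no mechanism in your sketch delivers it. In particular, ``Kopylov's disintegration with the parameter pinned to $a=k$ by $\delta(G)\ge k$'' is not an available tool: Kopylov's bound is the maximum of $h(n,c,a)=\binom{c-a}{2}+a(n-c+a)$ over $a\in\{2,\lfloor c/2\rfloor\}$ and carries no minimum-degree parameter; the minimum-degree refinement for circumference is Woodall's conjecture, and even its stability version required the separate work of \cite{MN20}, while the Kopylov stability of \cite{FKLV18} is calibrated to $a\in\{2,3\}$ and does not reach the fine band you need for general $k$. ``Tracking equality in every inequality'' of a rotation argument is a program, not a proof: the scenarios you list (several components off $C$, scattered low-degree vertices, $c<n-1$) are precisely where the work lies, and your claim that near-extremality forces $c=n-1$ with $G-C$ a single vertex is asserted, not derived.

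For comparison, the proof in \cite{LN16} (mirrored by this paper's own arguments) is much more elementary and avoids rotation and disintegration entirely: pass to the Bondy--Chv\'atal closure $G'=cl_n(G)$, which preserves hamiltonicity; the key closure lemma (Lemma~\ref{Lem:LiNing} here, from \cite{LN16}) shows that a closed graph with $e(G')>\binom{n-k-1}{2}+(k+1)^2$ and $n\ge 6k+5$ satisfies $\omega(G')\ge n-k$; one then examines how the at most $k$ vertices outside a maximum clique attach to it, and a short case analysis (either all attachments pass through one vertex of the clique, or the outside vertices have all their neighbours inside a common $k$-set of the clique) produces exactly the two exceptional families. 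If you want to salvage your plan, you would need either to prove such a closure/clique-number lemma yourself or to import a minimum-degree-sensitive stability theorem of the \cite{FKLV18}/\cite{MN20} type; as written, the decisive step is missing.
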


In 1977, Kopylov \cite{K77} determined a sharp edge condition for the circumference
of a 2-connected graph. In 2016, F\"{u}redi, Kostochka, and
Verstra\"{e}te \cite{FKV16} proved a stability version of Erd\H{o}s-Gallai
theorem, and finally (together with Luo) \cite{FKLV18} completed
the stability version of Kopylov's theorem \cite{K77}.
In fact, Kopylov's theorem is a special case of a conjecture due to Woodall \cite{W76}, which
refers to the sharp edge condition for circumference of a 2-connected graph
with given minimum degree.
Recently, Ma and Ning \cite{MN20} proved a stability version of Woodall's conjecture.

In this paper, we shall prove a stability result of Theorem \ref{Thm:BW}.
Let us introduce some notation.
\begin{definition}
Let $k$ and $n\geq k+1$ be integers. We define $\mathcal{F}_{n,k}$
to be a family of graphs, such that a graph $G\in\mathcal{F}_{n,k}$
if and only if $G$ is a graph of order $n$ in which there is a
subgraph $K\cong K_{n-k}$, and for each component $H$ of $G-V(K)$,
$V(H)$ is a clique and all vertices in $H$ are adjacent to a same
vertex in $K$. Specially, the graph $L_{n,k}\cong
K_1\vee(K_{n-k-1}+K_k)$ is the one in $\mathcal{L}_{n,k}$ with
maximum number of edges.
\end{definition}
\begin{thm}\label{Thm:LiNing-stability}
Let $G$ be a graph of order $n\geq \max\{6k+17,\frac{(k+4)(k+5)}{2}\}$
where $k\geq 0$. If $$e(G)\geq\binom{n-k-2}{2}+\binom{k+3}{2},$$ then
$G$ is weakly pancyclic with girth 3. Suppose that
$G$ contains no $C_{n-k}$. Then one of the following holds:\\
(a) $G\subseteq L$ for some $L\in\mathcal{L}_{n,k+1}$;\\
(b) $G=L_{n,k+2}\cong K_1\vee (K_{n-k-3}+K_{k+2})$;\\
(c) $k=0$ and $G\subseteq\varGamma_{n,2}:=K_2\vee (K_{n-4}+2K_1)$;\\
(d) $k=1$ and $G=\varGamma_{n,3}:=K_2\vee (K_{n-5}+3K_1)$.
\end{thm}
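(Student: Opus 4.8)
The plan is to run the stability method in the spirit of the Furedi--Kostochka--Verstraete arguments, using Woodall's Theorem~\ref{Thm:BW} itself as the ``exact'' result that our hypothesis barely misses. First I would dispose of the \emph{weakly pancyclic with girth 3} conclusion: since $e(G)\ge\binom{n-k-2}{2}+\binom{k+3}{2}>\binom{n}{2}/2$ for $n$ large, $G$ certainly contains a triangle, so the girth is $3$; and a standard consequence of the Bondy-type chord arguments (or a direct appeal to known weak-pancyclicity results for dense graphs, e.g. via Theorem~\ref{Thm:BW} applied after deleting a few vertices) shows that $G$ contains $C_\ell$ for every $\ell$ between $3$ and its circumference. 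Thus the whole theorem reduces to analysing the structure of a graph $G$ with the stated edge count that fails to contain $C_{n-k}$, i.e. has circumference at most $n-k-1$.

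The core of the argument is then a structural dichotomy. Suppose $G$ has $e(G)\ge\binom{n-k-2}{2}+\binom{k+3}{2}$ and no $C_{n-k}$. I would first bound the connectivity and the relevant vertex cut: either $G$ has a small ``separating clique'' behaving like the apex structure $K_1\vee(\cdots)$, or $G$ is close to $2$-connected with a $2$-cut, which is where the $K_2\vee(\cdots)$ families $\varGamma_{n,2},\varGamma_{n,3}$ enter. The key quantitative step is: since $e(G)$ exceeds Woodall's threshold $\binom{n-k-2}{2}+\binom{k+2}{2}+1$ for the parameter $k{+}1$ by only about $\binom{k+3}{2}-\binom{k+2}{2}-1=k+1$ edges, the graph $G$ is ``within $k+1$ edges'' of being extremal for containing $C_{n-k-1}$; so we already know $G\supseteq C_\ell$ for all $\ell\le n-k-1$, and we must pin down why the \emph{one} longer cycle $C_{n-k}$ is missing. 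I would set $t$ to be the number of vertices outside a maximum clique $K\cong K_m$ of $G$ (aiming to show $m$ is close to $n-k-1$), and then carefully count: the edges inside $K$, the edges from $V(G)\setminus V(K)$ into $K$, and the edges inside $V(G)\setminus V(K)$. The absence of $C_{n-k}$ forces each component of $G-V(K)$ to attach to $V(K)$ in a very restricted way (essentially through a single vertex, as in the definition of $\mathcal F_{n,k}$ and $\mathcal L_{n,k}$), because otherwise one can route a long cycle through $K$ and two attachment points. Pushing the edge count against these restrictions should squeeze $G$ into one of the families in (a)--(d), with the sporadic cases $k=0,1$ arising because for small $k$ the ``clique of size $k{+}2$'' degenerates into an independent set of size $2$ or $3$ hanging off a $K_2$.

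The main obstacle, I expect, is the tight edge-counting extremal analysis that separates case (a) (the family $\mathcal L_{n,k+1}$, with a $K_1$-apex and one leftover $K_{k+1}$) from case (b) ($L_{n,k+2}$, which has one fewer vertex in the big clique but a larger pendant clique) and from the sporadic $2$-apex cases (c),(d): all of these have edge counts within $O(k)$ of the threshold, so one cannot afford any slack and must argue via an exact comparison of $\binom{n-k-2}{2}+\binom{k+3}{2}$ against $e(L_{n,k+1})$, $e(L_{n,k+2})$, $e(\varGamma_{n,2})$, $e(\varGamma_{n,3})$ and the edge count of a generic member of $\mathcal F$. The second delicate point is verifying that each listed graph genuinely lacks $C_{n-k}$ while still being weakly pancyclic up to its circumference $n-k-1$ --- that is, checking the conclusion is consistent --- which is a finite but fiddly cycle-structure check in $K_1\vee(K_a+K_b)$ and $K_2\vee(K_a+jK_1)$. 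The lower bounds on $n$ in the hypothesis ($n\ge 6k+17$ and $n\ge\binom{k+5}{2}$) are exactly what is needed to make the dominant term $\binom{n-k-2}{2}$ beat all the error terms in these comparisons, so I would keep track of them throughout rather than invoke them only at the end.
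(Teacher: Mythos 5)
Your outline misses the single idea the paper's argument actually hinges on: passing to the Bondy--Chv\'atal closure. You propose to take a maximum clique $K\cong K_m$ of $G$ itself and to show $m$ is close to $n-k-1$, but this intermediate aim is false under the stated hypotheses. For instance, delete a near-perfect matching from inside the clique $K_{n-k-2}$ of $L_{n,k+1}=K_1\vee(K_{n-k-2}+K_{k+1})$: since $e(L_{n,k+1})-\bigl(\binom{n-k-2}{2}+\binom{k+3}{2}\bigr)=n-2k-4$ and the matching has only about $(n-k-2)/2<n-2k-4$ edges, the resulting graph still meets the edge bound, has no $C_{n-k}$ (it is a subgraph of $L_{n,k+1}$), yet its clique number is only about $n/2$. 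So no edge-counting around a maximum clique of $G$ can get started. The paper instead works with $G':=cl_n(G)$, uses Lemma \ref{Lem:BonChv} to transfer the absence of $C_{n-k}$ to $G'$, and applies the closure lemma (Lemma \ref{Lem:LiNing}, with parameter $k+2$) to force $\omega(G')\geq n-k-2$; only then does the attachment analysis of components of $G'-S$ (and, in the case $\omega(G')=n-k-2$, the set $T$ of outside vertices with at least two neighbours in $S$, whose size is squeezed between $2$ and $k+2$ and forced to be at least $2k+1$ by an edge count) yield cases (a)--(d), with the containments $G\subseteq G'$ giving the ``$G\subseteq L$'' form of the conclusions. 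Your proposal contains no substitute for this closure step, and the mechanism you suggest for the sporadic cases (c),(d) (``a $2$-cut'') is not the one that actually isolates $k=0,1$.

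A secondary inaccuracy: you claim that since the hypothesis is within $k+1$ edges of Woodall's threshold for parameter $k+1$, ``we already know $G\supseteq C_\ell$ for all $\ell\le n-k-1$.'' In fact the hypothesis $e(G)\geq\binom{n-k-2}{2}+\binom{k+3}{2}$ is exactly one below Woodall's bound for $k+1$, and $L_{n,k+2}$ (conclusion (b)) attains it while having circumference only $n-k-2$, so this framing of ``only the one cycle $C_{n-k}$ is missing'' is not available; the theorem only promises cycles up to the circumference (weak pancyclicity, which the paper gets from Lemma \ref{Lem:Bondy} via a discriminant computation, roughly as you gesture at). As written, the proposal does not constitute a viable route to the theorem.
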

As a non-trivial byproduct, we give a solution to the following open
problems proposed in \cite{GN19}. By $\rho(G)$ and $q(G)$ we
denote the spectral radius and signless Laplacian spectral radius of
the graph $G$.

\begin{problem}[\cite{GN19}]\label{Prob1}
Let $G$ be a connected graph of order $n$ and $k\geq 1$ be an
integer, where $n$ is sufficiently large compared to $k$.\\
(a) Suppose that $\rho(G)>\rho(L_{n,k})$.
Does $G$ contain a $C_{n-k+1}$?\\
(b) Suppose that $q(G)>q(L_{n,k})$. Does $G$ contain a $C_{n-k+1}$?
\end{problem}
Our answer is the following. When $k=2$, it implies all results in \cite{GN19}.

\begin{thm}\label{Thm:Spectrallargecylce}
Let $k\geq 1$ be an integer. Let $G$ be a graph of order $n$. If either\\
(a) $\rho(G)\geq\rho(L_{n,k})$ where $n\geq \max\{6k+11,\frac{(k+3)(k+4)}{2}\}$ or,\\
(b) $q(G)\geq q(L_{n,k})$ where $n\geq \max\{6k+11,k^2+2k+3\}$,\\
then $G$ contains a $C_{\ell}$ for each $\ell\in [3,n-k+1]$, unless
$G=L_{n,k}$.
\end{thm}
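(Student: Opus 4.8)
\medskip\noindent\emph{Proof outline.} The plan is to derive both parts from Theorem~\ref{Thm:LiNing-stability} applied with parameter $k-1$. Set $m_0:=\binom{n-k-1}{2}+\binom{k+2}{2}$, and observe that $m_0=\binom{n-(k-1)-2}{2}+\binom{(k-1)+3}{2}$ is exactly the edge threshold of Theorem~\ref{Thm:LiNing-stability} with parameter $k-1$, while the order hypotheses of Theorem~\ref{Thm:Spectrallargecylce} (in (a) and in (b) alike) imply $n\ge\max\{6k+11,\ \tfrac12(k+3)(k+4)\}$, its order requirement with that parameter. The proof then has three tasks: (i) deduce $e(G)\ge m_0$ from the spectral hypothesis; (ii) after Theorem~\ref{Thm:LiNing-stability} yields that $G$ is weakly pancyclic with girth $3$, rule out its exceptional families by spectral comparisons; (iii) dispose of disconnected $G$.

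\emph{Task (i).} First record two facts about $L_{n,k}=K_1\vee(K_{n-k-1}+K_k)$: it is connected and contains $K_{n-k}$ as a proper subgraph, so $\rho(L_{n,k})>n-k-1$; and evaluating $Q(L_{n,k})$ on the $0/1$ indicator of $V(K_{n-k})$ gives $q(L_{n,k})>2(n-k-1)$. Assume $G$ is connected. If $\rho(G)\ge\rho(L_{n,k})$, Hong's inequality $\rho(G)^2\le 2e(G)-n+1$ forces $e(G)>\tfrac12\big((n-k-1)^2+n-1\big)$, and a short computation shows this exceeds $m_0$ once $n\ge\tfrac12(k+1)(k+3)$, hence under the hypothesis $n\ge\tfrac12(k+3)(k+4)$. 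If $q(G)\ge q(L_{n,k})$, the bound $q(G)\le\tfrac{2e(G)}{n-1}+n-2$ for connected graphs forces $e(G)>\tfrac12(n-1)(n-2k)$, which exceeds $m_0$ once $n\ge k^2+2k+2$, hence under $n\ge k^2+2k+3$. In either case Theorem~\ref{Thm:LiNing-stability} with parameter $k-1$ applies: $G$ is weakly pancyclic with girth $3$, and if $G$ has no $C_{n-k+1}$ then $G$ is a subgraph of some $L\in\mathcal{L}_{n,k}$, or $G=L_{n,k+1}$, or $k=1$ and $G\subseteq\varGamma_{n,2}$, or $k=2$ and $G=\varGamma_{n,3}$.

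\emph{Task (ii).} Since $G$ is weakly pancyclic with girth $3$ it suffices to produce a $C_{n-k+1}$, i.e.\ to show each exceptional $H$ has $\rho(H)<\rho(L_{n,k})$ (resp.\ $q(H)<q(L_{n,k})$) unless $H=L_{n,k}$. Every member of $\mathcal{L}_{n,k}$ is by construction a subgraph of $L_{n,k}$, so that case follows from strict monotonicity of $\rho$ and $q$ under adding an edge to a connected graph. For $\varGamma_{n,2}$ and $\varGamma_{n,3}$, Hong's bound gives $\rho(\varGamma_{n,2})^2\le 2e(\varGamma_{n,2})-n+1<(n-2)^2<\rho(L_{n,1})^2$ and $\rho(\varGamma_{n,3})^2\le 2e(\varGamma_{n,3})-n+1<(n-3)^2<\rho(L_{n,2})^2$, while the same $q$-bound gives $q(\varGamma_{n,2})<2(n-2)<q(L_{n,1})$ and $q(\varGamma_{n,3})<2(n-3)<q(L_{n,2})$. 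The one comparison not settled by subgraph containment is $H=L_{n,k+1}$, because $L_{n,k+1}\not\subseteq L_{n,k}$; here I would transplant an eigenvector. Let $\mathbf{x}$ be the Perron vector of $A(L_{n,k+1})$ (resp.\ $Q(L_{n,k+1})$); by symmetry it is constant on the cells $\{\mathrm{apex}\},\,K_{n-k-2},\,K_{k+1}$, with values $a,b,c$ where $b>c$. Let $\mathbf{y}$ on $L_{n,k}$ carry the value $a$ on the apex, $b$ on $n-k-2$ vertices of $K_{n-k-1}$, and $c$ on the remaining vertex of $K_{n-k-1}$ together with all of $K_k$. Then $\|\mathbf{y}\|=\|\mathbf{x}\|$, and after cancellation $\mathbf{y}^{\top}A(L_{n,k})\mathbf{y}-\mathbf{x}^{\top}A(L_{n,k+1})\mathbf{x}=2c\big((n-k-2)b-kc\big)$ (resp.\ $\mathbf{y}^{\top}Q(L_{n,k})\mathbf{y}-\mathbf{x}^{\top}Q(L_{n,k+1})\mathbf{x}=(n-k-2)(b+c)^2-4kc^2$), which is positive since $n\ge 5k+2$; hence $\rho(L_{n,k})>\rho(L_{n,k+1})$ and $q(L_{n,k})>q(L_{n,k+1})$. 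Combining with Task (i), a connected $G$ satisfying (a) or (b) contains $C_{n-k+1}$, and therefore $C_\ell$ for every $3\le\ell\le n-k+1$, unless $G=L_{n,k}$.

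\emph{Task (iii).} If $G$ is disconnected, let $G_1$ be a component with $\rho(G_1)=\rho(G)\ge\rho(L_{n,k})$ (resp.\ $q(G_1)=q(G)\ge q(L_{n,k})$). From $\rho(G_1)>n-k-1$ (resp.\ $q(G_1)>2(n-k-1)$) one gets that $G_1$ has order $n_1\ge n-k+1$; write $k_1:=n_1-(n-k)\in\{1,\dots,k-1\}$, so $n_1-k_1=n-k$. One checks $n_1$ again meets the numerical hypotheses with parameter $k_1$; moreover $L_{n_1,k_1}$ and $L_{n_1,k_1+1}$ are proper subgraphs of the connected graph $L_{n,k}$, and $\varGamma_{n_1,2},\varGamma_{n_1,3}$ satisfy $\rho<n_1-2=n-k-1<\rho(L_{n,k})$ (and likewise for $q$). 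Hence running Tasks (i)--(ii) on the connected graph $G_1$ excludes every exceptional outcome and forces $C_{n_1-k_1+1}=C_{n-k+1}\subseteq G_1\subseteq G$, and then every shorter cycle. This completes the plan. I expect the crux to be Task (ii), specifically the comparison $\rho(L_{n,k+1})<\rho(L_{n,k})$ and $q(L_{n,k+1})<q(L_{n,k})$: these two near-extremal graphs are incomparable in the subgraph order, so one must control Rayleigh quotients (or the $3\times 3$ equitable quotient matrices) directly rather than invoke monotonicity, and one must keep the $n$-thresholds produced by Hong's inequality and by the signless Laplacian bound exactly aligned with the hypotheses.
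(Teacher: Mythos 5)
Your overall strategy coincides with the paper's: reduce to connected $G$, convert the spectral hypothesis into an edge bound via Hong's inequality (resp.\ the Das bound $q\le \frac{2m}{n-1}+n-2$), apply Theorem~\ref{Thm:LiNing-stability} with parameter $k-1$, and then eliminate the exceptional configurations by spectral comparisons, the comparison $\rho(L_{n,k+1})<\rho(L_{n,k})$, $q(L_{n,k+1})<q(L_{n,k})$ being done, as in the paper, by a Rayleigh-quotient/eigenvector-transplant computation. The numerical checks in your Task (i) match the paper's.

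However, Task (ii) contains a genuine gap at the case you dismiss in one line: the claim that ``every member of $\mathcal{L}_{n,k}$ is by construction a subgraph of $L_{n,k}$'' is false for $k\ge 2$. In the definition of $\mathcal{F}_{n,k}$ (the family denoted $\mathcal{L}_{n,k}$ in the statement), different components of $G-V(K)$ may attach to \emph{different} vertices of the clique $K\cong K_{n-k}$. For instance, take $K_{n-2}$ together with two further vertices $u_1,u_2$, where $u_1$ is joined to $v_1\in K$ and $u_2$ to $v_2\in K$ with $v_1\ne v_2$: this graph lies in $\mathcal{F}_{n,2}$, but it cannot be embedded in $L_{n,2}=K_1\vee(K_{n-3}+K_2)$, because the unique $(n-2)$-clique of $L_{n,2}$ forces both attachment vertices onto the single apex. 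So monotonicity of $\rho$ and $q$ under edge addition does not dispose of case (a) of Theorem~\ref{Thm:LiNing-stability}. Nor can a crude count rescue it: for such $F$ one has $2e(F)-n+1\le (n-k-1)^2+k^2$, so Hong's bound only gives $\rho(F)\le\sqrt{(n-k-1)^2+k^2}$, which does not separate $F$ from $L_{n,k}$, whose spectral radius is itself only slightly above $n-k-1$. This is exactly the point where the paper needs the Kelmans-operation results (Theorem~\ref{Thm:C09} of Csikv\'ari for $\rho$ and Theorem~\ref{Thm:LN} for $q$), packaged as Lemma~\ref{Lem:extremal}: sliding all attachment vertices to a common vertex does not decrease $\rho$ or $q$ and produces a subgraph of $L_{n,k}$, with the equality analysis giving the ``unless $G=L_{n,k}$'' clause. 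Your proposal has no substitute for this step, so the argument as written does not go through; the same omission also affects your Task (iii), where the exceptional family $\mathcal{L}_{n_1,k_1}$ for a component $G_1$ again contains graphs that are not subgraphs of $L_{n,k}$ (and, separately, your assertion that $n_1$ still satisfies the numerical hypotheses with parameter $k_1$ needs an actual verification, whereas the paper avoids this by joining the components with bridge edges instead of passing to a component).
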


Our technique is to combine the stability methods in extremal graph theory
with spectral technique. Compared with the original method in \cite{LN16},
we need to find such a stability result of number of edges for
$\Omega(\sqrt{n})$ cycles of consecutive lengths,
which is the main new point.

Our second part is devoted to an open problem on cycles with consecutive lengths
due to Nikiforov \cite{N08}.

Bondy \cite{B71} proved that every
hamiltonian graph $G$ on $n$ vertices contains cycles of all lengths
$\ell \in [3,n]$ if $e(G)\geq \frac{n^2}{4}$, unless $n$ is even and $G$ is isomorphic to $K_{\frac{n}{2},\frac{n}{2}}$.
If one drops the condition that ``$G$ is hamiltonian" in Bondy's theorem, a theorem in Bollob\'{a}s'
textbook \cite[Corrolary~5.4]{B76} states such a graph
$G$ contains all cycles $C_{\ell}$ for
each $\ell \in [3,\left\lfloor\frac{n+3}{2}\right\rfloor]$.
Nikiforov \cite{N08} considered cycles of consecutive lengths
from a spectral perspective.

\begin{problem}[Nikiforov \cite{N08}]\label{Prob:2}
What is the maximum $C$ such that for all positive $\varepsilon<C$ and sufficiently large $n$, every
graph $G$ of order $n$ with $\rho(G)\geq \sqrt{\lfloor\frac{n^2}{4}\rfloor}$
contains a cycle of length $\ell$ for every $\ell\leq (C-\varepsilon)n$.
\end{problem}
One may guess $C=\frac{1}{2}$. However, the class of graphs $G=K_s\vee (n-s)K_1$ where
$s=\frac{(3-\sqrt{5})n}{4}$ (see \cite{N08}) shows
$C\leq \frac{(3-\sqrt{5})}{2}$.
Nikiforov \cite{N08} proved that $C\geq \frac{1}{320}$.
Ning and Peng \cite{NP20} slightly refined this as $C\geq \frac{1}{160}$.
Only very recently, Mingqing Zhai and Huiqiu Lin (private communication)
improved these results to $C\geq\frac{1}{7}$.

The second purpose of this article is to show that $C\geq \frac{1}{4}$
by completely different methods.

\begin{thm}\label{Thm:SpectraConsecutiveCycles}\footnote{If $0<\varepsilon<10^{-6}$,
then we can choose $N=2.5\times 10^{10}{\varepsilon}^{-1}$.}
Let $\varepsilon$ be real with $0<\varepsilon<\frac{1}{4}$. Then there
exists an integer $N:=N(\varepsilon)$, such that if $G$ is a graph on
$n$ vertices with $n\geq N$ and $\rho(G)>\sqrt{\lfloor\frac{n^2}{4}\rfloor}$,
then $G$ contains all cycles $C_{\ell}$ with $\ell \in [3,(\frac{1}{4}-\varepsilon)n]$.
\end{thm}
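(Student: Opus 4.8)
\medskip
\noindent\textbf{Proof plan.}
The idea is to split the required cycle lengths by parity: use a (new) Erd\H{o}s--Gallai type \emph{edge} condition to realize all the even lengths, and use the non-bipartiteness that the spectral hypothesis forces to bootstrap from there to the odd lengths. First I would make two cheap reductions. We may assume $G$ is connected: replacing $G$ by a component $G'$ with $\rho(G')=\rho(G)$ gives $|V(G')|\ge\rho(G')+1>\sqrt{\lfloor n^2/4\rfloor}+1>n/2$, so every cycle of length at most $(\tfrac14-\varepsilon)n$ that we find in $G'$ lies in $G$ and is short enough to exist in $G'$; write $m:=|V(G)|\le n$. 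Then, by Hong's inequality $\rho(G)^2\le 2e(G)-m+1$,
\[
e(G)\ \ge\ \tfrac12\bigl(\rho(G)^2+m-1\bigr)\ >\ \tfrac12\Bigl\lfloor\tfrac{n^2}{4}\Bigr\rfloor\ >\ \tfrac{n^2}{8}-1
\]
and, since any triangle-free graph $F$ satisfies $\rho(F)\le\sqrt{e(F)}$ (Nosal) and $e(F)\le\lfloor|V(F)|^2/4\rfloor$ (Mantel), the hypothesis $\rho(G)>\sqrt{\lfloor n^2/4\rfloor}$ forces a triangle in $G$; this disposes of $\ell=3$ and shows the girth is $3$. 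Note that the edge count alone is genuinely insufficient: $K_{\lfloor 0.85n\rfloor,\,\lceil 0.15n\rceil}$ has about $0.1275\,n^2>n^2/8$ edges yet contains no odd cycle at all, and it is exactly the spectral hypothesis (through Nosal--Mantel) that rules such graphs out.

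The next step is to locate a $2$-connected, non-bipartite subgraph $H\subseteq G$ with $e(H)>\tfrac{n^2}{8}-o(n^2)$. By the Erd\H{o}s--Gallai circumference bound, $e(G)>n^2/8$ already gives a cycle of length greater than $n/4$ in $G$; this cycle lies in a single block $B^\ast$, which is therefore $2$-connected. If $B^\ast$ is non-bipartite we take $H=B^\ast$. If $B^\ast$ is bipartite then $\rho(B^\ast)\le\sqrt{\lfloor|V(B^\ast)|^2/4\rfloor}\le\sqrt{\lfloor n^2/4\rfloor}<\rho(G)$, so the spectral surplus of $G$ must be carried by the rest of the block--cut tree; here one uses the spectral hypothesis to show that $G$ cannot be fragmented into many blocks, that consequently some block is non-bipartite with spectral radius at least $\sqrt{\lfloor n^2/4\rfloor}-o(n)$ and hence (by Hong again) with more than $\tfrac{n^2}{8}-o(n^2)$ edges, and that one may take $H$ to be such a block. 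In all cases $H$ is $2$-connected, non-bipartite, has $e(H)>\tfrac{n^2}{8}-o(n^2)$, and $|V(H)|\le n$.

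Now the even cycles. The Erd\H{o}s--Gallai type edge condition for even cycles established in this paper upgrades the circumference bound to the following: a graph with $e$ edges on $v$ vertices contains $C_{2j}$ for \emph{every} $j$ with $2\le j\le(1-o(1))\,e/v$. Applying this to $H$, and using $e(H)>\tfrac{n^2}{8}-o(n^2)$ together with $|V(H)|\le n$ (so that $e(H)/|V(H)|>(\tfrac18-o(1))n$), we get that $H$ contains $C_\ell$ for every even $\ell$ with $4\le\ell\le(\tfrac14-o(1))n$; for $n\ge N(\varepsilon)$ this covers all even lengths in $[4,(\tfrac14-\varepsilon)n]$. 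To obtain the odd lengths, fix a shortest odd cycle $O$ of $H$. Since $H$ is $2$-connected, starting from the consecutive even cycles just obtained and rerouting one of them through an arc of $O$ along two disjoint connecting paths (Menger), one manufactures cycles of the odd lengths interleaved among those even lengths; a short bookkeeping then yields $C_\ell$ for every $\ell$ with $3\le\ell\le(\tfrac14-\varepsilon)n$. (Equivalently, one shows that $H$ is weakly pancyclic with girth $3$ and circumference exceeding $(\tfrac14-\varepsilon)n$, in the spirit of the weak pancyclicity conclusion of Theorem~\ref{Thm:LiNing-stability}, from the consecutive-even-cycle structure together with the classical theory of cycle lengths in graphs.)

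The main obstacle is the Erd\H{o}s--Gallai type lemma for even cycles together with the block reduction that feeds it. That lemma is genuinely a stability statement: a graph with only a little more than $n^2/8$ edges that misses some even cycle length below $n/4$ must be extremely close to a balanced complete bipartite graph, and excluding this is where the real work lies. The delicate regime for the whole argument is when $\rho(G)$ only just exceeds $\sqrt{\lfloor n^2/4\rfloor}$, so that $G$ is close to $K_{\lceil n/2\rceil,\lfloor n/2\rfloor}$ or to $K_s\vee(n-s)K_1$ with $s=\tfrac{3-\sqrt5}{4}n$ (the extremal example for the problem); there the degree and Perron estimates carry essentially no surplus and one is forced into a direct stability analysis of the near-extremal structure. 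Finally, the constant $\tfrac14$ is precisely the Erd\H{o}s--Gallai threshold $e(G)>n^2/8\Rightarrow\mathrm{circ}(G)>n/4$: pushing $C$ beyond $\tfrac14$ would require extracting from $\rho(G)>\sqrt{\lfloor n^2/4\rfloor}$ strictly more than $n^2/8$ edges, or a guaranteed cycle longer than $n/4$, which the present approach does not do.
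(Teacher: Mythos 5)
There are genuine gaps at the two places where your sketch does the real work. First, your even-cycle step rests on a lemma that is not what the paper proves and is in fact false as stated: Theorem~\ref{Thm:Erdos-Gall-Voss} only bounds the length of the \emph{longest} even cycle (if every even cycle has length at most $2k$ then $e\le\frac{(2k+1)(n-1)}{2}$); it does not give $C_{2j}$ for \emph{every} $j\le(1-o(1))e/v$. That stronger statement fails, e.g., for dense $C_4$-free graphs (polarity/incidence graphs have $e/v\sim\sqrt v/2$ yet no $C_4$), so edge density alone cannot deliver all even lengths. The paper converts ``one long even cycle'' into ``all even lengths'' via the Gould--Haxell--Scott theorem, which needs a \emph{linear minimum degree}; accordingly the paper first passes from $G$ to a subgraph $H$ with $\delta(H)>n/8$ and average degree $>n/4$ by repeatedly deleting low-degree vertices. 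Your proposal never secures any minimum-degree condition on the block you work in, so the consecutive even lengths do not follow.

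Second, your route to a non-bipartite $2$-connected piece is unjustified: the spectral radius of a graph is not controlled by its blocks (a star has $\rho=\sqrt{n-1}$ while every block has $\rho=1$), so from ``$B^\ast$ is bipartite'' you cannot conclude that some other block carries spectral radius $\ge\sqrt{\lfloor n^2/4\rfloor}-o(n)$, and the whole ``spectral surplus must sit in some block'' step has no proof. The paper's way around this is the Sun--Das inequality $\rho^2(G)\le\rho^2(G-v)+2d(v)$ (Lemma~\ref{Lem:Sun-Das}), which lets one delete the low-degree vertices and then at most seven cut vertices while keeping $\rho$ of the remaining piece above roughly $n/4$, whence a $2$-connected component $F_1$ with $\delta(F_1)\ge|F_1|/8$, non-bipartite by a Nosal--Mantel comparison. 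Finally, your odd-length step (``reroute an even cycle through an arc of a shortest odd cycle via Menger, plus bookkeeping'') is only a hope: interpolating all odd lengths this way is exactly the hard part, and the paper instead gets a long odd cycle from Voss--Zuluaga in $F_1$, all middle odd lengths from Gould--Haxell--Scott (again needing the minimum degree), and the short odd lengths $\ell\le n/320$ from Nikiforov's earlier theorem. As written, your argument would not close any of these three steps.
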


Let $G$ be a graph. We use $\omega(G)$ to denote clique number
of $G$. Let $G_1$ and $G_2$ be two vertex-disjoint graphs.
The \emph{union} of $G_1$ and $G_2$, denoted by $G_1+G_2$, is defined to be a graph
$G'$ with $V(G')=V(G_1)\cup V(G_2)$ and $E(G')=E(G_1)\cup E(G_2)$. The \emph{join} of $G_1$ and $G_2$, denoted by $G_1\vee G_2$,
is a new graph obtained from $G_1+G_2$ by adding all possible edges from
$G_1$ to $G_2$.

Let $A(G)$ be the adjacency matrix of a graph $G$ and $D$ be the degree matrix
of $G$. The \emph{spectral radius} of $G$, denoted by $\rho(G)$, is the largest eigenvalue
of $A(G)$. The \emph{signless Laplacian spectral radius} of $G$, denoted by $q(G)$, is the largest eigenvalue
of the signless Laplacian matrix $Q(G):=A(G)+D(G)$.

The paper is organized as follows. In Section \ref{Sec:Woodall}, we prove
a sharp version of Woodall's theorem and also a stability version of it.
In Section \ref{Sec:Spectral}, we answer Problem \ref{Prob1}
completely. %and prove Theorem \ref{Thm:2-connectedspectral}.
In Section \ref{Sec:Nikiforov}, we consider Nikiforov's open problem
on cycles with consecutive lengths. In the last section,
we mention some related problem for further study.

\section{Woodall's theorem updated}\label{Sec:Woodall}
We first refine Woodall's Theorem on Tur\'an number of large cycles
as follows. We call a graph weakly pancyclic if it contains all cycles of lengths
from the smallest one to the largest one.

\begin{thm}\label{Thm:RefinedBW}
Let $G$ be a graph of order $n\geq \max\{6k+11,\frac{(k+3)(k+4)}{2}\}$, where $k\geq 0$.
If $$e(G)\geq \binom{n-k-1}{2}+\binom{k+2}{2},$$  then
$G$ is weakly pancyclic with girth 3. Furthermore,
one of the following is true:\\
(a) $G$ contains a $C_{\ell}$ for each $\ell\in [3,n-k]$;\\
(b) $G=L_{n,k+1}\cong K_1\vee(K_{n-k-2}+K_{k+1})$.
\end{thm}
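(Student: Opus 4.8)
The plan is to establish the ``weakly pancyclic with girth $3$'' assertion first, and then to argue by the circumference $c(G)$: if $c(G)\ge n-k$ we are in case (a), while if $c(G)\le n-k-1$ we prove that $G=L_{n,k+1}$. (This dichotomy is consistent with the statement: one checks directly that $L_{n,k+1}$ has exactly $\binom{n-k-1}{2}+\binom{k+2}{2}$ edges, has circumference $n-k-1$, and is weakly pancyclic with girth $3$, so it is a genuine extremal example.) For the first assertion, note $e(G)\ge\binom{n-k-1}{2}$, which for $n$ in the stated range exceeds $\lfloor n^2/4\rfloor$, so $G$ contains a triangle by Mantel's theorem and hence has girth $3$ and is non-bipartite; moreover $e(G)>\lfloor(n-1)^2/4\rfloor+1$, so by a theorem of Brandt (every non-bipartite $n$-vertex graph with more than $\lfloor(n-1)^2/4\rfloor+1$ edges is weakly pancyclic) $G$ is weakly pancyclic with girth $3$. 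Then $G$ contains $C_\ell$ for all $\ell\in[3,c(G)]$: if $c(G)\ge n-k$ this is case (a), so assume $c(G)\le n-k-1$, i.e.\ $G$ has no $C_{n-k}$. By Theorem~\ref{Thm:BW}, the absence of $C_{n-k}$ forces $e(G)\le\binom{n-k-1}{2}+\binom{k+2}{2}$, hence $e(G)=\binom{n-k-1}{2}+\binom{k+2}{2}$.

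Next I would show $G$ is connected but not $2$-connected. If $G$ were $2$-connected, then as it has no cycle of length $\ge n-k$, Kopylov's theorem~\cite{K77} (applicable since $n-k\ge5$) gives $e(G)\le\max\{h(n,n-k,2),\,h(n,n-k,\lfloor(n-k)/2\rfloor)\}$ with $h(n,c,a)=\binom{c-a}{2}+a(n-c+a)$; but $\binom{n-k-1}{2}+\binom{k+2}{2}-h(n,n-k,2)=(n-k-2)-2(k+2)+\binom{k+2}{2}=n-3k-6+\binom{k+2}{2}>0$ since $n\ge6k+11$, and the hypotheses on $n$ similarly make $h(n,n-k,\lfloor(n-k)/2\rfloor)$ strictly less than $e(G)$, contradicting the Kopylov bound. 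If $G$ were disconnected, with components $H_1,\dots,H_r$ ($r\ge2$), build a connected graph $G^{+}$ on $n$ vertices by adding $r-1$ bridges joining the $H_i$ into a tree; since bridges lie on no cycle, $c(G^{+})=c(G)\le n-k-1$, yet $e(G^{+})=e(G)+r-1\ge\binom{n-k-1}{2}+\binom{k+2}{2}+1$, so Theorem~\ref{Thm:BW} forces $C_{n-k}\subseteq G^{+}$, hence $C_{n-k}\subseteq G$, a contradiction. So $G$ has a cut vertex $v$.

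For the main step, let $D_1,\dots,D_m$ ($m\ge2$) be the components of $G-v$ and $G_i=G[V(D_i)\cup\{v\}]$, so that $e(G)=\sum_ie(G_i)$, every cycle of $G$ lies in one $G_i$, and $c(G_i)\le n-k-1$; order $|G_1|\ge\dots\ge|G_m|$. If $|G_1|\le n-k-1$, then $e(G_i)\le\binom{|G_i|}{2}$ for all $i$, and maximising $\sum_i\binom{|G_i|}{2}$ over $2\le|G_i|\le n-k-1$ with $\sum_i(|G_i|-1)=n-1$ gives, by convexity of $x\mapsto\binom{x}{2}$, the value $\binom{n-k-1}{2}+\binom{k+2}{2}$, attained only for $m=2$, $|G_1|=n-k-1$, $|G_2|=k+2$ and both $G_i$ complete; as $e(G)$ meets this bound, equality holds throughout, so $G$ is $K_{n-k-1}$ and $K_{k+2}$ sharing $v$, i.e.\ $G=K_1\vee(K_{n-k-2}+K_{k+1})=L_{n,k+1}$, which is case (b). If instead $|G_1|\ge n-k$, put $k_1=|G_1|-n+k\in\{0,\dots,k-1\}$ (using $n-k\le|G_1|\le n-1$; in particular $k\ge1$ here) and note $|G_1|-k_1=n-k$; since $G_1$ has no $C_{n-k}$, Theorem~\ref{Thm:BW} applied to $G_1$ (whose order satisfies the required bound) gives $e(G_1)\le\binom{n-k-1}{2}+\binom{k_1+2}{2}$, while $\sum_{i\ge2}(|G_i|-1)=k-k_1$ gives, again by convexity, $\sum_{i\ge2}e(G_i)\le\binom{k-k_1+1}{2}$; hence $e(G)\le\binom{n-k-1}{2}+\binom{k_1+2}{2}+\binom{k-k_1+1}{2}$. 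Since $\binom{k_1+2}{2}+\binom{k-k_1+1}{2}$ is convex in $k_1$, on $0\le k_1\le k-1$ it is at most its endpoint value $1+\binom{k+1}{2}$, which is $<\binom{k+2}{2}$, contradicting $e(G)=\binom{n-k-1}{2}+\binom{k+2}{2}$; so this case does not arise. This finishes the proof.

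The step I expect to require the most care is checking $\max\{h(n,n-k,2),h(n,n-k,\lfloor(n-k)/2\rfloor)\}<\binom{n-k-1}{2}+\binom{k+2}{2}$ over the full range $n\ge\max\{6k+11,(k+3)(k+4)/2\}$ -- the second term being where the quadratic bound on $n$ is actually used -- together with the extremal bookkeeping that isolates $L_{n,k+1}$ as the unique surviving configuration in the cut-vertex analysis. The weak-pancyclicity part is painless once Brandt's theorem is invoked; if one wished to be self-contained, the alternative is to extract a near-spanning cycle from Erd\H{o}s--Gallai and run a rotation/extension argument, which is strictly more work.
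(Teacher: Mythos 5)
Your proof is correct, but it follows a genuinely different route from the paper. The paper proves weak pancyclicity via Bondy's lemma (edge count versus $\tfrac{c(2n-c)}{4}$, a discriminant computation), then passes to the Bondy--Chv\'atal closure $G'=cl_n(G)$ and uses the closure lemma (Lemma~\ref{Lem:LiNing}) to force $\omega(G')\ge n-k-1$; the extremal graph $L_{n,k+1}$ is then extracted by counting edges between the $(n-k-1)$-clique and the $k+1$ remaining vertices. You instead invoke Brandt's theorem (after Mantel) for weak pancyclicity, and for the structural part you exploit the fact that the hypothesis sits exactly one edge below Woodall's threshold: Theorem~\ref{Thm:BW} pins $e(G)$ to exact equality, Kopylov's theorem excludes $2$-connectedness, adding bridges plus Woodall excludes disconnectedness, and a cut-vertex decomposition with a convexity/smoothing argument (including the second case $|G_1|\ge n-k$, where Woodall is reapplied to the big block with parameter $k_1$) isolates $L_{n,k+1}$. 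I checked the two computations you left implicit: the Kopylov endpoint bound $h(n,n-k,\lfloor(n-k)/2\rfloor)<\binom{n-k-1}{2}+\binom{k+2}{2}$ reduces, with $m=\lfloor(n-k)/2\rfloor$, to $m^2-(2k+5)m+(k^2+3k+4)>0$ (even case) and $m^2-(2k+3)m+(k+1)(k+2)>0$ (odd case), both of which hold comfortably for $n\ge 6k+11$; and the maximizer $(|G_1|,|G_2|)=(n-k-1,k+2)$ in the convexity step is indeed unique, so the equality analysis forcing $G=L_{n,k+1}$ is sound. What each approach buys: yours is shorter and more modular for this exact-threshold statement, but it leans on the rigidity of exact equality and on two external results (Kopylov, which the paper cites, and Brandt, which it does not), so it does not carry over to the stability version (Theorem~\ref{Thm:LiNing-stability}), where the edge hypothesis is strictly below Woodall's threshold; the paper's closure-plus-clique-lemma machinery is precisely what is reused there.
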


The proof of Theorem \ref{Thm:RefinedBW} needs the following three lemmas.
The \emph{circumference} of $G$, denoted by $c(G)$, is the length
of a longest cycle in $G$. The \emph{$n$-closure} $cl_n(G)$, is defined to be a graph of order $n$
by recursively joining any pair of non-adjacent vertices with degree sum
at least $n$ till there is no such pair.
\begin{lem}[Bondy and Chv\'atal \cite{BC76}]\label{Lem:BonChv}
Let $G$ be a graph of order $n$ and $C':=cl_n(G)$.
Then $c(G)=c(cl_n(G))$.
\end{lem}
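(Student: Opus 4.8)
The plan is to reduce the closure statement to a single edge addition and then show that joining one high-degree-sum pair leaves the circumference unchanged. First I record the trivial direction: since $G\subseteq cl_n(G)$, every cycle of $G$ survives, so $c(G)\le c(cl_n(G))$. For the reverse inequality I would work along a closure sequence $G=G_0\subset G_1\subset\cdots\subset G_m=cl_n(G)$, where $G_{i+1}=G_i+u_iv_i$ is obtained by joining a non-adjacent pair with $d_{G_i}(u_i)+d_{G_i}(v_i)\ge n$. Since passing to a supergraph on the same vertex set only increases degrees, each step is a legitimate instance of the single-edge claim below, so applying it $m$ times telescopes to $c(G)=c(G_1)=\cdots=c(cl_n(G))$. (That this holds for the closure, independently of the order of additions, follows from the standard uniqueness of $cl_n(G)$, but for the equality any one valid sequence suffices.)

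The single-edge claim I isolate is: if $u,v$ are non-adjacent in a graph $H'$ on $n$ vertices with $d_{H'}(u)+d_{H'}(v)\ge n$, then $c(H'+uv)=c(H')$. Only the inequality ``$\le$'' needs argument. Suppose it fails, and let $C$ be a \emph{longest} cycle of $H:=H'+uv$, of length $\ell=c(H)>c(H')$. Then $C$ must use the new edge $uv$, for otherwise $C\subseteq H'$ would contradict $\ell>c(H')$. Hence $P:=C-uv$ is a $u$--$v$ path $w_1w_2\cdots w_\ell$ with $w_1=u$ and $w_\ell=v$, living entirely in $H'$.

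The heart of the matter is a P\'osa-type crossover on $P$ driven by a degree count, and the observation that makes the count go through is that $u$ and $v$ have \emph{no common neighbour off $C$}. Indeed, a vertex $z\notin V(C)$ adjacent to both $u$ and $v$ would let me reroute $C$ through $u,z,v$, producing a cycle of length $\ell+1$ inside $H'$ and contradicting the maximality of $C$. Thus $N_{H'}(u)\setminus V(C)$ and $N_{H'}(v)\setminus V(C)$ are disjoint subsets of the $n-\ell$ vertices off $C$, so their sizes sum to at most $n-\ell$; subtracting from $d_{H'}(u)+d_{H'}(v)\ge n$ shows that the numbers $d_P(u),d_P(v)$ of neighbours of $u$ and of $v$ on $P$ satisfy $d_P(u)+d_P(v)\ge\ell$. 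Now set $A=\{i: uw_{i+1}\in E(H')\}$ and $B=\{i: vw_i\in E(H')\}$, both subsets of $\{1,\dots,\ell-1\}$ with $|A|=d_P(u)$ and $|B|=d_P(v)$. Since $|A|+|B|\ge\ell>\ell-1$, pigeonhole gives some $i\in A\cap B$, and for this $i$ the cycle $u\,w_2\cdots w_i\,v\,w_{\ell-1}\cdots w_{i+1}\,u$ lies entirely in $H'$ and has length $\ell$, contradicting $c(H')<\ell$.

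I expect the only genuine subtlety to be the off-cycle neighbour bound: one must insist that $C$ be a longest cycle (not merely an $\ell$-cycle) so that rerouting through a common off-cycle neighbour really contradicts maximality, and one must confirm the crossover cycle uses no edge of $H\setminus H'$. The endpoint bookkeeping is routine but worth stating to make the pigeonhole exact: because $uv\notin E(H')$ we have $w_\ell=v\notin N(u)$ and $w_1=u\notin N(v)$, so in fact $A\subseteq\{1,\dots,\ell-2\}$ and $B\subseteq\{2,\dots,\ell-1\}$, which both guarantees $A,B\subseteq\{1,\dots,\ell-1\}$ with the claimed cardinalities and forces the crossover index to satisfy $2\le i\le\ell-2$, so the resulting cycle is non-degenerate.
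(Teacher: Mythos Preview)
The paper does not supply its own proof of this lemma: it is quoted from Bondy and Chv\'atal \cite{BC76} and used as a black box. Your argument is correct and is precisely the classical proof one finds in (or reconstructs from) that reference: reduce to a single edge addition along a closure sequence, take a longest cycle in $H'+uv$ that is forced to use $uv$, and then run the P\'osa rotation/crossover on the resulting $u$--$v$ path. The one extra ingredient needed for the circumference version (as opposed to the Hamiltonian case, where $\ell=n$ and there are no off-cycle vertices) is exactly your observation that $u$ and $v$ share no neighbour outside $C$, which you justify by maximality of $C$; this is what makes the count $d_P(u)+d_P(v)\ge \ell$ go through. Your endpoint bookkeeping ($A\subseteq\{1,\dots,\ell-2\}$, $B\subseteq\{2,\dots,\ell-1\}$, hence $2\le i\le \ell-2$) is accurate and ensures the crossover cycle is a genuine $\ell$-cycle in $H'$. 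There is nothing to compare against in the present paper, and nothing to fix in your write-up.
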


\begin{lem}[\rm Bondy \cite{B71}]\label{Lem:Bondy}
Let $G$ be a graph of order $n$. If $c(G)=c$ and $e(G)>\frac{c(2n-c)}{4}$,
then $G$ is weakly pancyclic with girth 3.
\end{lem}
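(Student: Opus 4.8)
The plan is to fix a longest cycle $C=v_1v_2\cdots v_cv_1$ of length $c=c(G)$ and to prove the contrapositive: if $G$ omits a cycle of some length $m\in[3,c]$ (so it fails to be weakly pancyclic with girth $3$), then $e(G)\le \frac{c(2n-c)}{4}$. The guiding extremal example is $K_{\lfloor c/2\rfloor,\,n-\lfloor c/2\rfloor}$, which has circumference exactly $c$, contains no triangle, and has precisely $\frac{c(2n-c)}{4}$ edges; matching this graph is what forces the strict inequality in the hypothesis and pins down the conclusion ``girth $3$''. The engine is a chord-to-length dictionary on $C$: a chord $v_iv_{i+s}$ of cyclic span $s$ yields cycles of lengths $s+1$ and $c-s+1$, and more generally a path of $p$ edges through $V(G)\setminus V(C)$ joining two vertices of $C$ at cyclic distance $s$ yields cycles of lengths $s+p$ and $c-s+p$ (maximality of $C$ forcing every such length into $[3,c]$). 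In particular a single span-$2$ chord already produces both a triangle and a $C_{c-1}$, so the presence of chords of all spans $2,3,\dots,\lceil c/2\rceil$ would immediately deliver every length in $[3,c]$.

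The first step is to record the structural consequences of the maximality of $C$, which bound the edges not lying inside $C$. Writing $R=V(G)\setminus V(C)$ with $|R|=n-c$, I would use: (i) for each $v\in R$ the neighbours of $v$ on $C$ are pairwise non-consecutive along $C$ (otherwise $v$ could be inserted to lengthen $C$), so they form an independent set in $C$ and $d_C(v)\le \lfloor c/2\rfloor$; and (ii) for each component $B$ of $G[R]$, the attachment intervals of $B$ on $C$ cannot be too dense, since a path through $B$ between nearby attachments would again lengthen $C$. These two facts control the number $e_{CR}$ of $C$--$R$ edges together with the number $e_R$ of edges inside $R$, and they reduce the problem to the edges inside $C$ plus a controlled surplus. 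The role of the chord dictionary is exactly that the edges inside $C$ cannot be simultaneously numerous and span-deficient.

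With these bounds in hand I would split according to which length is missing. If $G$ is triangle-free, a localized Mantel/Bondy--Simonovits type estimate should give $e(G)\le \frac{c(2n-c)}{4}$ directly, since a triangle-free graph with no cycle longer than $c$ behaves like a bipartite graph whose smaller side has at most $\lfloor c/2\rfloor$ vertices; this settles the girth-$3$ claim (and the case $m=3$). The substantial case is that $G$ contains a triangle but omits some intermediate length $m$ with $3<m<c$. Here the chord dictionary shows that $C$ has no chord of span $m-1$ nor of span $c-m+1$, and, iterating its bridge form through $R$, that $G$ admits no configuration realizing the deficit $c-m$. The crux is to show that such simultaneous avoidance forces $G$ to be near-bipartite between two arcs of $C$, mirroring $K_{\lfloor c/2\rfloor,\,n-\lfloor c/2\rfloor}$, and hence caps $e(G)$ at $\frac{c(2n-c)}{4}$. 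I expect this near-bipartite rigidity to be the main obstacle: translating ``one cycle length is absent'' into a genuine bipartite-like partition of the whole vertex set, and then doing the bookkeeping so that the edge count lands exactly on the threshold rather than above it. Once this is established the contrapositive is complete, so $e(G)>\frac{c(2n-c)}{4}$ guarantees both a triangle and a cycle of every length up to $c$, i.e. $G$ is weakly pancyclic with girth $3$.
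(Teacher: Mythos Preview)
The paper does not prove this lemma at all: it is quoted as a known result of Bondy (reference \cite{B71}) and used as a black box. So there is no ``paper's own proof'' to compare against; the authors simply invoke the theorem.

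As for your proposal on its own merits, what you have written is a plan, not a proof, and you yourself flag the place where it breaks down. The triangle-free case is asserted (``a localized Mantel/Bondy--Simonovits type estimate should give $e(G)\le \frac{c(2n-c)}{4}$ directly''), but no such estimate is actually produced; triangle-free is far from bipartite, and you would need a genuine argument here. More seriously, in the main case --- $G$ contains a triangle but misses some $C_m$ with $3<m<c$ --- you correctly identify that the absence of one cycle length forbids certain chord spans and certain $R$-bridges, but then you write ``I expect this near-bipartite rigidity to be the main obstacle'' and stop. That \emph{is} the whole content of Bondy's argument: converting a single missing length into a global structural constraint strong enough to cap the edge count at $\frac{c(2n-c)}{4}$. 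Knowing that a chord of span $m-1$ and of span $c-m+1$ is absent on $C$ is a very weak restriction (only two of the $\lfloor c/2\rfloor$ possible spans are ruled out), and it is not at all clear how this propagates to a near-bipartition of all of $V(G)$. Bondy's actual proof proceeds by a careful inductive edge count on the longest cycle and is substantially more intricate than the outline you give; you have located the right skeleton (longest cycle, chord dictionary, edges to $R$) but have not supplied the mechanism that makes it work.
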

For the last lemma, its original form
in \cite{LN16} needs the condition ``$k\geq 1$''.
Here we prove the small case that $k=0$. This lemma is the key tool for our proof.
\begin{lem}\label{Lem:LiNing}
Let $G$ be a graph of order $n\geq 6k+5$, where $k\geq 0$. If
$G=cl_n(G)$ and $e(G)>\binom{n-k-1}{2}+(k+1)^2$
then $\omega(G)\geq n-k$.
\end{lem}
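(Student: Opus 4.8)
The plan is to split on the value of $k$. For $k\geq 1$ the statement is exactly the lemma already established in \cite{LN16}, so nothing new is required there; the only case that needs a fresh argument is $k=0$. In that case the hypothesis reads $e(G)>\binom{n-1}{2}+1$, and the conclusion $\omega(G)\geq n$ simply asserts that $G$ is complete.

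For $k=0$ I would pass to the complement $\bar G$. First count: $e(\bar G)=\binom{n}{2}-e(G)<\binom{n}{2}-\binom{n-1}{2}-1=n-2$, so $e(\bar G)\leq n-3$. Now suppose for contradiction that $G$ is not complete and fix a non-edge $uv$, i.e.\ an edge $uv$ of $\bar G$. Since $G=cl_n(G)$, the vertices $u$ and $v$ were never joined in the closure process, so $d_G(u)+d_G(v)\leq n-1$; translating to $\bar G$ gives $d_{\bar G}(u)+d_{\bar G}(v)=2(n-1)-(d_G(u)+d_G(v))\geq n-1$. Every edge of $\bar G$ meeting $\{u,v\}$ is counted in $d_{\bar G}(u)+d_{\bar G}(v)$, with only $uv$ itself counted twice, hence $e(\bar G)\geq d_{\bar G}(u)+d_{\bar G}(v)-1\geq n-2$, contradicting $e(\bar G)\leq n-3$. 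Therefore $G$ is complete, so $\omega(G)=n=n-k$, as claimed.

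I do not expect any genuine obstacle in the $k=0$ case: it collapses because the target clique must be all of $V(G)$, which forces $\bar G$ to be almost edgeless, and then a single non-edge together with the closure degree-sum bound already overcounts the edges of $\bar G$. The only points requiring a little care are the off-by-one in the inclusion–exclusion when counting edges of $\bar G$ incident to $\{u,v\}$, and a quick check that the standing hypothesis $n\geq 6k+5$ (so $n\geq 5$ here) comfortably covers the handful of tiny $n$ for which the arithmetic would otherwise have to be inspected by hand.
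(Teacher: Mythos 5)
Your proof is correct and follows essentially the same route as the paper: reduce to the case $k=0$ by citing \cite{LN16}, then use the closure property that a non-adjacent pair has degree sum at most $n-1$ and derive a contradiction with the edge count. The only cosmetic difference is that you count edges in the complement, whereas the paper bounds $e(G)\leq e(G-\{x,y\})+d(x)+d(y)\leq\binom{n-1}{2}+1$ directly; the two counts are equivalent.
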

\begin{proof}
Recall that the case of $k\geq 1$ was proved in \cite{LN16}. Now set $k=0$.
Suppose that there exist two vertices $x,y\in V(G)$ such that $d(x)+d(y)\leq n-1$.
Let $H:=G-\{x,y\}$. Then $e(G)\leq e(H)+d(x)+d(y)\leq \binom{n-2}{2}+n-1=\binom{n-1}{2}+1$,
a contradiction. Thus, for any two nonadjacent vertices, the degree sum
of them is at least $n$. By the definition of $n$-closure, $G=K_n$ and so $\omega(G)=n$.
\end{proof}
We are in stand for proving Theorem \ref{Thm:RefinedBW}.

\noindent {\bf Proof of Theorem \ref{Thm:RefinedBW}.} Suppose that
$G$ is a graph satisfying the condition. We first show that $G$ is
weakly pancyclic with girth 3. Let $c:=c(G)$. By Lemma
\ref{Lem:Bondy}, we only need to show that
$\binom{n-k-1}{2}+\binom{k+2}{2}>\frac{c(2n-c)}{4}$. If not, then we
have
$$\frac{nc}{2}-\frac{c^2}{4}\geq\frac{n^2-(2k+3)n}{2}+(k+1)(k+2),$$
which implies that $$c^2-2nc+2(n^2-(2k+3)n)+4(k+2)(k+1)\leq 0.$$
However, the discriminant of quadratic form
$$\varDelta=(2n)^2-4\left(2(n^2-(2k+3)n)+4(k+1)(k+2)\right)<0$$ for
$n\geq 2k+5$, a contradiction. This proves the first part of the
theorem.

Now let $G'=cl_n(G)$. Since
$$e(G')\geq e(G)\geq\binom{n-k-1}{2}+\binom{k+2}{2}\geq
\binom{n-k-2}{2}+(k+2)^{2}+1$$ for $n\geq \max\{6k+11,
\frac{(k+3)(k+4)}{2}\}$, by Lemma \ref{Lem:LiNing}, $\omega(G')\geq
n-k-1$. This implies that $c(G')\geq n-k-1$. If $c(G')\geq n-k$,
then $c(G)=c(G')\geq n-k$ by Lemma \ref{Lem:BonChv}. Recall that $G$
is weakly pancyclic, implying that (a) holds. So assume that
$c(G')\leq n-k-1$. Since $c(G')\geq \omega(G')$, we have
$\omega(G')=n-k-1$.

Let $S$ be a clique of $G'$ with $|S|=n-k-1$, let $K=G'[S]$ and
$H=G-S$. Thus $K$ is complete. Let $H_1$ be an arbitrary component
of $H$. If $|N_{G'}(H_1)\cap S|\geq 2$, then clearly $c(G')\geq
n-k$, a contradiction. Thus we conclude that $|N_{G'}(H_1)\cap
S|\leq 1$ for every component $H_1$ of $H$. Specially, every vertex
$v\in V(H)$ has $|N_{G'}(v)\cap S|\leq 1$. Now
\begin{align*}
e(G'-S) & =e(G')-e(K)-e_{G'}(S,V(H))\geq e(G)-e(K)-e_{G'}(S,V(H))\\
        & \geq\binom{n-k-1}{2}+\binom{k+2}{2}-\binom{n-k-1}{2}-(k+1)=\binom{k+1}{2}.
\end{align*}
Since $|V(H)|=k+1$, we infer that $V(H)$ is a $(k+1)$-clique and
equality holds in the above formula. This implies that $G=G'$ and
$|N_G(v)\cap S|=1$ for every $v\in V(H)$. Recall that $|N(H)\cap
S|=1$. All vertices in $H$ have a common neighbor in $S$. We obtain
that $G=L_{n,k+1}$, and (b) holds. The proof is complete.
\hfill{\rule{4pt}{8pt}}

\medskip

We further prove a stability result of Theorem \ref{Thm:BW} as
follows.

\noindent {\bf Proof of Theorem \ref{Thm:LiNing-stability}.} The
argument used here is similar to  Theorem \ref{Thm:RefinedBW}.
However, more details are needed. We first claim that $G$ is weakly
pancyclic with girth 3. By Lemma \ref{Lem:Bondy}, we shall show that
$\binom{n-k-2}{2}+\binom{k+3}{2}>\frac{c(2n-c)}{4}$. Suppose to the
contrary that $c^2-2nc+2(n^2-(2k+5)n)+4(k+2)(k+3)\leq 0$. However,
$$(2n)^2-4\left(2(n^2-(2k+5)n)+4(k+2)(k+3)\right)<0$$ when $n\geq
2k+7$, a contradiction. This proves the first part of the theorem.

Let $G':=cl_n(G)$. If $c(G')\geq n-k$, then by Lemma
\ref{Lem:BonChv}, $c(G)=c(G')\geq n-k$. Recall that $G$ is weakly
pancyclic, implying that $G$ contains $C_{n-k}$. So we assume that
$c(G')\leq n-k-1$. Since $$e(G')\geq e(G)\geq
\binom{n-k-2}{2}+\binom{k+3}{2}\geq \binom{n-k-3}{2}+(k+3)^2+1$$ for
$n\geq \frac{(k+4)(k+5)}{2}$. By Lemma
\ref{Lem:LiNing}, $\omega(G')\geq n-k-2$ for $n\geq 6k+17$. If $\omega(G')\geq n-k$.
then $c(G')\geq\omega(G')\geq n-k$, a contradiction. Now we assume
that $\omega(G')=n-k-2$ or $\omega(G')=n-k-1$. Let $S$ be a clique
of $G'$ with $|S|=\omega(G')$, $K=G'[S]$ and $H=G'-S$.

\underline{Case A. $\omega(G')=n-k-1$.} Let $H_1$ be an arbitrary
component of $H$. If $|N_{G'}(H_1)\cap S|\geq 2$, then $c(G')\geq
n-k$ (recall that $S$ is a clique of $G'$), a contradiction. Thus,
every component $H_1$ of $G'-S$ satisfies
$|N_{G'}(H_1)\cap S|\leq 1$. It follows $G\subseteq G'\subseteq
F\in\mathcal{F}_{n,k+1}$ for some $F$, and (a) holds.

\underline{Case B. $\omega(G')=n-k-2$.} Set $T=\{v\in V(H):
|N_{G'}(v)\cap S|\geq 2\}$. We distinguish the following subcases.

\underline{Case B.1. $|T|=0$.} In this case, every vertex $v\in
V(H)$ has $|N_{G'}(v)\cap S|\leq 1$. Now
\begin{align*}
e(G'-S) & =e(G')-e(K)-e_{G'}(S,V(H))\geq e(G)-e(K)-e_{G'}(S,V(H))\\
        & \geq\binom{n-k-2}{2}+\binom{k+3}{2}-\binom{n-k-2}{2}-(k+2)=\binom{k+2}{2}.
\end{align*}
Since $|V(H)|=k+2$, we infer that $V(H)$ is a $(k+2)$-clique and
equality holds in the above formula. This implies that $G=G'$ and
$|N_G(v)\cap S|=1$ for every $v\in V(H)$. If $|N(H)\cap S|\geq 2$,
then clearly $c(G)\geq n-k$, a contradiction. This implies that all
vertices in $H$ have a common neighbor in $S$. We obtain that
$G=L_{n,k+2}$, and (b) holds.

\underline{Case B.2. $|T|=1$.} Let $v_1$ be the unique vertex in
$T$. Let $H_1$ be an arbitrary component of $H-v_1$. If $v_1\in
N_{G'}(H_1)$, then $N_{G'}(H_1)\cap S=\emptyset$; for otherwise
$c(G')\geq n-k$. Furthermore, If $|N_{G'}(H_1)\cap S|\geq 2$, then
there are two independent edges between $S$ and $V(H_1)$ (notice
that in $G'$, every vertex in $H_1$ has at most 1 neighbors in $S$),
implying that $c(G')\geq n-k$, a contradiction. Thus,
$|N_{G'}(H_1)\cap(S\cup\{v_1\})|\leq 1$ for every component $H_1$ of
$G'-(S\cup\{v_1\})$. This implies that $G\subseteq G'\subseteq
F\in\mathcal{F}_{n,k+1}$ and (a) holds.

\underline{Case B.3. $|T|\geq 2$.} Let $v_1$ be a vertex in $T$ and
$u_1,u_2$ be two vertices in $N_{G'}(v_1)\cap S$. For any other
vertex $v_2\in T$, we have that $N_{G'}(v_2)\cap S=\{u_1,u_2\}$, for
otherwise $c(G')\geq n-k$. Furthermore, $N_{G'}(v_1)=\{u_1,u_2\}$.
In brief, we have $N_{G'}(T)\cap S=\{u_1,u_2\}$. If there are
two vertices in $T$ which are adjacent in $G'$, then $c(G')\geq n-k$, a
contradiction. So $T$ is independent in $G'$. For any
vertex $v\in V(G)\backslash(S\cup T)$, we claim that
$|N_{G'}(v)\cap(S\cup T)|\leq 1$. Indeed, as $v\notin T$, $v$ cannot
have two neighbors in $S$. If $N_{G'}(v)$ contains two vertices in
$T$ or contains one vertex in $T$ and one vertex in $S$, then we
have $c(G')\geq n-k$, a contradiction. Set $t=|T|$. Notice that
$2\leq t\leq k+2$. Now
\begin{align*}
e(G')   & =e(K)+e_{G'}(S,T)+e_{G'}(S\cup T,V(G)\backslash(S\cup T))+e(H-T)\\
        & \leq\binom{n-k-2}{2}+2t+(k+2-t)+\binom{k+2-t}{2}\\
        & =\binom{n-k-2}{2}+\binom{k+3}{2}+\frac{t^2-(2k+1)t}{2}\\
        &\leq  e(G)+\frac{t(t-2k-1)}{2}.
\end{align*}
This implies that $t\geq 2k+1$. Combining with $2\leq t\leq k+2$, it
can only be that $k=0$ and $t=2$, or $k=1$ and $t=3$. In each case $V(G)=S\cup
T$. For the first case, we have $G\subseteq G'=\varGamma_{n,2}$, and (c)
holds. For $k=1$ and $t=3$, $G'=\varGamma_{n,3}$. Moreover, equality
holds in the above inequalities, implying that $G=G'$ and (d) holds.
\hfill{\rule{4pt}{8pt}}
\section{Spectral results}\label{Sec:Spectral}

Let $G$ be a graph and $u,v\in V(G)$. We use $G[u\rightarrow v]$ to denote
a new graph obtained from $G$, by replacing all edges $uw$ by $vw$,
where $w\in N_G(u)\backslash (N_G(v)\cup \{v\})$.
Following Brouwers' book, we call this as ``Kelmans operation''.

In this article, we need some results on spectral properties of graphs under Kelmans operation.
These theorems will play important roles in our answers to Problem \ref{Prob1}.
\begin{thm}[Csikv\'ari \cite{C09}]\label{Thm:C09}
Let $G$ be a graph and $u,v\in V(G)$. Let $G':=G[u\rightarrow v]$.
Then $\rho(G')\geq \rho(G)$.
\end{thm}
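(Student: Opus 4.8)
The plan is to use the Rayleigh quotient characterization $\rho(H)=\max_{y\neq 0} y^{\top}A(H)y/(y^{\top}y)$ together with the Perron eigenvector of $G$. Let $x=(x_w)_{w\in V(G)}$ be a nonnegative eigenvector of $A(G)$ associated with $\rho(G)$, normalized so that $x^{\top}x=1$ (such $x$ exists by the Perron--Frobenius theorem, taking the eigenvector supported on a component of $G$ realizing $\rho(G)$ and extending by zeros if $G$ is disconnected).

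First I would record the effect of the Kelmans operation on the edge set. Writing $W:=N_G(u)\setminus(N_G(v)\cup\{v\})$ for the set of ``private'' neighbors of $u$, the graph $G'=G[u\rightarrow v]$ is obtained from $G$ by deleting every edge $uw$ with $w\in W$ and adding every edge $vw$ with $w\in W$, while leaving all other edges untouched and keeping $V(G')=V(G)$; no loops or parallel edges arise since the vertices of $W$ are non-adjacent to $v$ in $G$ and distinct from $v$. Next I would note the symmetry $G[u\rightarrow v]\cong G[v\rightarrow u]$: both graphs are described by assigning one of $\{u,v\}$ the neighborhood $N_G(u)\cup N_G(v)$ (with the other vertex removed) and the remaining vertex the neighborhood $N_G(u)\cap N_G(v)$ (possibly together with its partner), the two descriptions differing only by the transposition of the labels $u$ and $v$. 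Hence we may assume without loss of generality that $x_v\geq x_u$.

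Finally I would estimate the Rayleigh quotient of $A(G')$ at the vector $x$, now viewed as a vector on $V(G')=V(G)$:
\[
x^{\top}A(G')x-x^{\top}A(G)x
 = 2\sum_{w\in W}\bigl(x_v x_w-x_u x_w\bigr)
 = 2\,(x_v-x_u)\sum_{w\in W}x_w\;\geq\;0,
\]
using $x\geq 0$ and $x_v\geq x_u$. Since $x^{\top}x=1$, this yields $\rho(G')\geq x^{\top}A(G')x\geq x^{\top}A(G)x=\rho(G)$, which is the claim.

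The computation is entirely routine; the one point deserving care is the symmetry reduction $G[u\rightarrow v]\cong G[v\rightarrow u]$, which is what licenses the assumption $x_v\geq x_u$. Without that normalization the sign of $x_v-x_u$ is uncontrolled and the displayed inequality can fail in $G'$ — but then one applies the identical estimate in $G[v\rightarrow u]$ instead and invokes $\rho(G[v\rightarrow u])=\rho(G[u\rightarrow v])$ from the isomorphism, so no generality is lost. (If one wants more, equality analysis via the eigenvalue equation for $x$ at $u$, $v$, and $W$ identifies exactly when $\rho(G')=\rho(G)$, but this is not needed here.)
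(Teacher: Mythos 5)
Your argument is correct: the Kelmans operation is indeed symmetric in $u$ and $v$ up to the transposition of those two labels, so the reduction to $x_v\geq x_u$ is legitimate, and the Rayleigh-quotient estimate with the (zero-extended) Perron vector then gives $\rho(G')\geq x^{\top}A(G')x\geq x^{\top}A(G)x=\rho(G)$. The paper itself offers no proof here --- Theorem~\ref{Thm:C09} is quoted from Csikv\'ari \cite{C09} --- and your proof is essentially the standard one from that source, where the case $x_u>x_v$ is handled equivalently by swapping the coordinates of the test vector at $u$ and $v$ rather than by invoking the isomorphism $G[u\rightarrow v]\cong G[v\rightarrow u]$.
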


\begin{thm}[Li and Ning \cite{LN16}]\label{Thm:LN}
Let $G$ be a graph and $u,v\in V(G)$. Let $G':=G[u\rightarrow v]$.
Then $q(G')\geq q(G)$.
\end{thm}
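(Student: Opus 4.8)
The plan is to combine the Rayleigh-quotient characterization $q(H)=\max_{\|\mathbf{x}\|=1}\mathbf{x}^{\top}Q(H)\mathbf{x}$ with the identity
$$\mathbf{x}^{\top}Q(H)\mathbf{x}=\sum_{ij\in E(H)}(x_i+x_j)^2,$$
valid for every graph $H$ and every real vector $\mathbf{x}=(x_w)_{w\in V(H)}$ (immediate from $Q=A+D$); this parallels Csikv\'ari's proof of Theorem~\ref{Thm:C09} for $\rho$, where $\mathbf{x}^{\top}A(H)\mathbf{x}=2\sum_{ij\in E(H)}x_ix_j$ plays the same role. Write $G'=G[u\to v]$ and let $A=N_G(u)\setminus(N_G(v)\cup\{v\})$ be the set of ``private'' neighbours of $u$; by definition $E(G')$ is obtained from $E(G)$ by deleting $\{uw:w\in A\}$ and adding $\{vw:w\in A\}$, and since $w\notin N_G(v)\cup\{u,v\}$ for each such $w$, the graph $G'$ is again simple. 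Pick a nonnegative unit eigenvector $\mathbf{x}\ge 0$, $\|\mathbf{x}\|=1$, of the symmetric entrywise-nonnegative (indeed positive semidefinite) matrix $Q(G)$ for its largest eigenvalue, so that $\mathbf{x}^{\top}Q(G)\mathbf{x}=q(G)$.

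First I would treat the case $x_v\ge x_u$. Feeding $\mathbf{x}$ itself into the Rayleigh quotient of $G'$ gives
$$q(G')\ \ge\ \mathbf{x}^{\top}Q(G')\mathbf{x}\ =\ q(G)+\sum_{w\in A}\bigl[(x_v+x_w)^2-(x_u+x_w)^2\bigr]\ =\ q(G)+\sum_{w\in A}(x_v-x_u)(x_u+x_v+2x_w),$$
and every summand is $\ge 0$ since $x_v\ge x_u\ge 0$ and $x_w\ge 0$; hence $q(G')\ge q(G)$.

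For the complementary case $x_u>x_v$ I would instead test $G'$ against the vector $\mathbf{y}$ obtained from $\mathbf{x}$ by interchanging its $u$- and $v$-coordinates (still a unit vector). Partitioning the edges at $u$ and $v$ according to the common neighbours $C=N_G(u)\cap N_G(v)$, the private set $A$ above, the private neighbours $B=N_G(v)\setminus(N_G(u)\cup\{u\})$ of $v$, and the possible edge $uv$, one computes $\mathbf{y}^{\top}Q(G')\mathbf{y}$ and $\mathbf{x}^{\top}Q(G)\mathbf{x}$ edge by edge; the edge $uv$ and all edges avoiding $\{u,v\}$ contribute identically, the blocks over $C$ and over $A$ cancel, and what remains is
$$\mathbf{y}^{\top}Q(G')\mathbf{y}-q(G)=\sum_{w\in B}\bigl[(x_u+x_w)^2-(x_v+x_w)^2\bigr]=\sum_{w\in B}(x_u-x_v)(x_u+x_v+2x_w)\ \ge\ 0,$$
so again $q(G')\ge\mathbf{y}^{\top}Q(G')\mathbf{y}\ge q(G)$. (Alternatively one can bypass the second case by checking that the relabelling $u\leftrightarrow v$ is an isomorphism $G[u\to v]\cong G[v\to u]$, so the direction of the operation may be chosen to make $x_v\ge x_u$.)

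The only point requiring care is the edge-accounting in the second case: one must track precisely which edges incident to $\{u,v\}$ survive, vanish, or appear under the Kelmans operation, and how swapping the two coordinates of $\mathbf{x}$ re-labels each block; once this is set up correctly the conclusion is a single nonnegativity estimate. I do not anticipate any obstacle beyond this bookkeeping.
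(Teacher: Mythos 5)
Your proposal is correct; note that the paper itself gives no proof of this statement, quoting it directly from \cite{LN16}. Your argument --- a nonnegative eigenvector of $Q(G)$, the quadratic form $\mathbf{x}^{\top}Q\mathbf{x}=\sum_{ij\in E}(x_i+x_j)^2$, and the case split on $x_u$ versus $x_v$ (or equivalently the isomorphism $G[u\to v]\cong G[v\to u]$) --- is essentially the same Csikv\'ari-style Kelmans-operation argument, adapted from the adjacency form to the signless Laplacian form, by which the result is established in \cite{LN16}, and your edge-accounting over the blocks $A$, $B$, $C$ and the edge $uv$ checks out.
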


The following spectral inequalities help us to invert our problems
into ones in extremal style.

\begin{thm}[Hong \cite{H93}]\label{Thm:Hong}
Let $G$ be a graph on $n$ vertices and $m$ edges. If $\delta(G)\geq 1$
then $\rho(G)\leq \sqrt{2m-n+1}$.
\end{thm}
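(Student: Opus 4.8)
The plan is to bound $\rho=\rho(G)$ directly through a nonnegative (Perron--Frobenius) eigenvector $x=(x_1,\dots,x_n)^{T}$ associated with $\rho$, and to localize the whole estimate at the vertex carrying the largest eigenvector entry. After relabeling I would assume $x_1=\max_i x_i>0$, and use only the eigen-equation $\rho x_i=\sum_{j\sim i}x_j$, which holds at every vertex $i$ and requires no global regularity of $G$. Note first that $\delta(G)\ge 1$ forces $\rho>0$, so $d_1\ge 1$.

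First I would iterate the eigen-equation once to control $\rho^{2}$ at vertex $1$. Writing $\rho^{2}x_1=\sum_{j\sim1}\rho x_j=\sum_{j\sim1}\sum_{k\sim j}x_k$ and isolating the walks of length two that return to $1$ (the term $k=1$ occurs exactly once for each of the $d_1$ neighbours $j$ of $1$, contributing $d_1x_1$ in total), I obtain
\begin{align*}
\rho^{2}x_1 = d_1 x_1+\sum_{j\sim1}\ \sum_{\substack{k\sim j\\ k\ne 1}}x_k \le d_1 x_1+x_1\sum_{j\sim1}(d_j-1),
\end{align*}
where the inequality is just $x_k\le x_1$ for all $k$. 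Dividing by $x_1>0$ gives the local bound $\rho^{2}\le \sum_{j\sim1}d_j$, the ``two-degree'' of the extremal vertex.

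The crux, and the one genuinely nontrivial step, will be to convert this purely local quantity into the claimed global one, and this is exactly where the hypothesis $\delta(G)\ge 1$ must be spent. I would partition $V(G)$ into $\{1\}$, the $d_1$ neighbours of $1$, and the $n-1-d_1$ vertices outside the closed neighbourhood $N[1]$. From $\sum_i d_i=2m$ we get
\[
\sum_{j\sim1}d_j=2m-d_1-\sum_{i\notin N[1]}d_i .
\]
Since $\delta(G)\ge 1$, each of the $n-1-d_1$ vertices outside $N[1]$ has degree at least $1$, so $\sum_{i\notin N[1]}d_i\ge n-1-d_1$; substituting yields $\sum_{j\sim1}d_j\le 2m-n+1$, and together with the first step this gives $\rho\le\sqrt{2m-n+1}$.

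I expect the remaining points to be bookkeeping rather than depth. One should check the degenerate case $d_1=n-1$, where $V\setminus N[1]$ is empty and the bound reads $\sum_{j\sim1}d_j\le 2m-(n-1)$, and one should justify taking the Perron vector nonnegative when $G$ is disconnected (apply Perron--Frobenius to the nonnegative matrix $A(G)$, or restrict to a component attaining $\rho$). The conceptual content is entirely in the second step: the naive estimate $\rho^{2}\le 2m$ coming from $\sum_i\lambda_i^{2}=2m$ is too lossy, and the saving of exactly $n-1$ is forced precisely because the $n-1-d_1$ vertices invisible to the local bound must each carry at least one unit of degree under $\delta\ge 1$. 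Tracking when equality holds throughout both inequalities should, as a byproduct, recover the extremal graphs $K_n$ and the stars $K_{1,n-1}$.
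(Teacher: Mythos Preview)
Your argument is correct and is essentially the standard proof of Hong's inequality: localize at a vertex of maximum eigenvector entry to get $\rho^{2}\le \sum_{j\sim 1}d_j$, then use $\delta(G)\ge 1$ on the vertices outside $N[1]$ to bound this ``two-degree'' by $2m-n+1$. The bookkeeping points you flag (the case $d_1=n-1$, and choosing a nonnegative eigenvector on a component attaining $\rho$ when $G$ is disconnected) are handled correctly.

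There is, however, nothing to compare against in the paper itself: Theorem~\ref{Thm:Hong} is quoted from Hong~\cite{H93} as a black-box tool and is not reproved here. So your write-up stands on its own as a valid proof of the cited result, but the paper offers no alternative approach to set beside it.
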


\begin{thm}[Das \cite{Das}]\label{Thm:Das}
Let $G$ be a graph on $n$ vertices and $m$ edges.
Then
$q(G)\leq \frac{2m}{n-1}+n-2$.
\end{thm}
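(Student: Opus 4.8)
The plan is to turn the statement into an eigenvalue estimate for the complement of $G$, which is where the extremal structure lives. First I would dispose of trivialities: if $G$ has no edge, or more generally if $q(G)\le n-2$, there is nothing to prove, since $\frac{2m}{n-1}+n-2\ge n-2$ whenever $m\ge 0$; so assume $m:=e(G)\ge 1$ and $q(G)>n-2$. Write $\bar G$ for the complement of $G$ and $\bar m=\binom{n}{2}-m=e(\bar G)$. Using $Q(K_n)=J+(n-2)I$ and the fact that $Q(K_n)-Q(G)=Q(\bar G)$ is positive semidefinite, we get $Q(G)=(n-2)I+\bigl(J-Q(\bar G)\bigr)$; as these two matrices differ only by a scalar multiple of $I$ they have the same eigenvectors, and
$$q(G)-(n-2)=\lambda_1\!\bigl(J-Q(\bar G)\bigr).$$
Since $\frac{2m}{n-1}+n-2=2n-2-\frac{2\bar m}{n-1}$, the theorem is thus equivalent to the inequality
$$\lambda_1\!\bigl(J-Q(\bar G)\bigr)\;\le\; n-\frac{2\bar m}{n-1},$$
which I abbreviate by $(\star)$. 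Two remarks give alternative footholds: by Weyl's monotonicity applied to $Q(G)$ and $Q(K_n)$ one gets $q_i(G)\le n-2$ for all $i\ge 2$; and writing $Q(G)=BB^{\top}$ with $B$ the unsigned incidence matrix gives $q(G)=2+\rho\bigl(L(G)\bigr)$ for the line graph of $G$. Moreover, by Theorem~\ref{Thm:LN} the Kelmans operation does not decrease $q(G)$ and preserves both $n$ and $e(G)$, so one may additionally assume $G$ is a threshold graph.

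For the main estimate I would work with the Perron eigenvector. Let $x\ge 0$ be the Perron eigenvector of $Q(G)$ — equivalently of $J-Q(\bar G)$ — normalized so that $\sum_i x_i=1$. Set $\lambda:=q(G)-(n-2)>0$ and write $\bar d_i$ for the degree of vertex $i$ in $\bar G$. Reading off coordinate $i$ of $\bigl(J-Q(\bar G)\bigr)x=\lambda x$ gives
$$\sum_{j:\,ij\in E(\bar G)}x_j \;=\; 1-(\lambda+\bar d_i)x_i\;\ge\;0,$$
hence the pointwise bounds $x_i\le(\lambda+\bar d_i)^{-1}$. Summing over $i$, and using that $\sum_i\sum_{j\sim_{\bar G}i}x_j=\sum_i\bar d_ix_i$, yields the clean identity $\lambda=n-2\sum_i\bar d_ix_i$. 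Consequently $(\star)$ is equivalent to the single scalar inequality
$$\sum_i \bar d_i\,x_i\;\ge\;\frac{\bar m}{n-1}\;=\;\frac{1}{2(n-1)}\sum_i\bar d_i,$$
call it $(\star\star)$: the Perron weights must not load too heavily onto the vertices of small $\bar d_i$, i.e.\ the high-degree vertices of $G$. To prove $(\star\star)$ I would play the pointwise bounds $x_i\le(\lambda+\bar d_i)^{-1}$ against the two global relations $\sum_i x_i=1$ and $\lambda\|x\|_2^{2}+x^{\top}Q(\bar G)x=1$ (the latter obtained by pairing the eigenvalue equation with $x$), using convexity of $t\mapsto(\lambda+t)^{-1}$ to control the distribution of the $x_i$ against the degree sequence $(\bar d_i)$.

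The hard part will be exactly $(\star\star)$. The first-moment manipulations above are self-referential — summing the eigenvalue equation merely reproduces $\lambda=n-2\sum_i\bar d_ix_i$ — and a crude use of $\|x\|_2^2\ge 1/n$ only delivers a weaker constant in place of $n-1$. So the real content is a genuine second-moment (or matrix-domination) argument establishing $(\star\star)$ with the sharp denominator $n-1$; this is precisely what is done in \cite{Das}, and since we use the theorem here only as a tool, one may either reconstruct that argument along the lines sketched above or simply invoke it. A useful sanity check on any such argument is that it must yield equality exactly for $G\in\{K_n,\,K_{1,n-1},\,K_{n-1}+K_1\}$, the graphs for which the bound is attained.
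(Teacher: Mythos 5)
First, note that the paper does not prove this statement at all: it is quoted verbatim as a known tool from \cite{Das}, so the only ``proof'' in the paper is the citation. Measured against that, your proposal ultimately also rests on invoking \cite{Das}, which is acceptable for the purposes of this paper; but judged as a proof attempt it has a genuine gap, and you should be clear that the preliminary machinery does not close it. Your reductions are individually correct: $Q(G)+Q(\bar G)=J+(n-2)I$, hence $q(G)-(n-2)=\lambda_1\bigl(J-Q(\bar G)\bigr)$; the coordinate identity $\sum_{j\sim_{\bar G}i}x_j=1-(\lambda+\bar d_i)x_i\ge 0$; and the summed identity $\lambda=n-2\sum_i\bar d_i x_i$. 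The problem is that, given this last identity, your inequality $(\star\star)$, namely $\sum_i\bar d_i x_i\ge \bar m/(n-1)$, is \emph{exactly equivalent} to the theorem being proved — so the reduction is a reformulation, not progress. All of the analytic content (the sharp denominator $n-1$) is concentrated in $(\star\star)$, and the route you sketch for it is not carried out: as you yourself observe, the first-moment relations are circular, the pointwise bounds $x_i\le(\lambda+\bar d_i)^{-1}$ together with $\sum_i x_i=1$ only give $\lambda\le n$ (i.e.\ the trivial $q(G)\le 2n-2$) or, in near-regular situations, a bound with denominator $n$ rather than $n-1$, and the proposed convexity/second-moment step is left entirely unspecified. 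The Kelmans/threshold-graph reduction via Theorem~\ref{Thm:LN} is also only asserted, not used.

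So the honest summary of your argument is: ``the statement is equivalent to $(\star\star)$, which is proved in \cite{Das}.'' That is fine — and it is in effect what the paper does — but then the eigenvector manipulations are dispensable, and they should not be presented as if they reduce the difficulty. If you want a self-contained proof, the standard route (essentially Das's) is different: bound $q(G)\le\max_{i}\{d_i+m_i\}$, where $m_i$ is the average degree of the neighbours of $i$ (handling isolated vertices separately), and then show the purely combinatorial inequality $d_i+m_i\le \frac{2m}{n-1}+n-2$ for every vertex $i$; your equality check ($K_n$, $K_{1,n-1}$, $K_{n-1}+K_1$) is consistent with that argument.
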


The following two lemmas will be used to determine the extremal graphs.

\begin{lem}\label{Lem:extremal}
Let $G$ be a graph. Suppose that $G$ is a subgraph of
a member in $\mathcal{F}_{n,k}$, where $n\geq 2k+1$.\\
(a) If $\rho(G)\geq \rho(L_{n,k})$, then $G=L_{n,k}$.\\
(b) If $q(G)\geq q(L_{n,k})$, then $G=L_{n,k}$.
\end{lem}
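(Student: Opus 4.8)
The plan is to first reduce the statement to the following claim: for every $F\in\mathcal{F}_{n,k}$ one has $\rho(F)\le\rho(L_{n,k})$, with equality if and only if $F\cong L_{n,k}$, and likewise with $q$ in place of $\rho$. Granting this, suppose $G$ is a subgraph of some $F\in\mathcal{F}_{n,k}$ with $\rho(G)\ge\rho(L_{n,k})$. Monotonicity of $\rho$ under passing to a subgraph gives $\rho(L_{n,k})\le\rho(G)\le\rho(F)\le\rho(L_{n,k})$, so $\rho(F)=\rho(L_{n,k})$ and hence $F\cong L_{n,k}$; then $G$ is a subgraph of the connected graph $L_{n,k}$ with $\rho(G)=\rho(L_{n,k})$, which by the Perron--Frobenius theorem (entrywise strict monotonicity of the spectral radius of an irreducible nonnegative matrix) forces $G=L_{n,k}$. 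The argument for part (b) is verbatim the same, using that $Q(G)=A(G)+D(G)$ is entrywise monotone under adding edges and irreducible for a connected graph with at least one edge.

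To prove the claim, fix $F\in\mathcal{F}_{n,k}$, let $S$ be a set with $F[S]\cong K_{n-k}$, and let $H_1,\dots,H_m$ be the components of $F-S$; by definition of $\mathcal{F}_{n,k}$ each $H_i$ is a clique all of whose vertices are joined to one common vertex $w_i\in S$ and to no other vertex of $S$. The first step is a consolidation by Kelmans operations: whenever $w_i\ne w_j$, replace $F$ by $F[w_j\rightarrow w_i]$. Since $S$ is a clique, $N_F(w_j)\setminus(N_F(w_i)\cup\{w_i\})$ is exactly the union of the vertex sets of the components attached to $w_j$, so this operation merely re-attaches those cliques to $w_i$ while leaving $F[S]$ and every $H_\ell$ intact; it therefore stays inside $\mathcal{F}_{n,k}$, preserves $e(F)$, strictly decreases the number of distinct attachment vertices, and (by Theorems~\ref{Thm:C09} and~\ref{Thm:LN}) does not decrease $\rho$ or $q$. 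Iterating produces $\widetilde F\in\mathcal{F}_{n,k}$ in which every component of $\widetilde F-S$ is attached to one fixed vertex $w\in S$, with $\rho(\widetilde F)\ge\rho(F)$, $q(\widetilde F)\ge q(F)$ and $e(\widetilde F)=e(F)$.

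The key point is that such a $\widetilde F$ is a subgraph of $L_{n,k}=K_1\vee(K_{n-k-1}+K_k)$: realize $L_{n,k}$ with apex $w$, with the $K_{n-k-1}$ on $S\setminus\{w\}$ and the $K_k$ on $V(\widetilde F)\setminus S$ (which has exactly $k$ vertices); every edge of $\widetilde F$ lies inside $S$, inside some $H_i$, or joins $w$ to a vertex of some $H_i$, and in all three cases it is an edge of this copy of $L_{n,k}$. Since $L_{n,k}$ is connected, $\rho(\widetilde F)\le\rho(L_{n,k})$ and $q(\widetilde F)\le q(L_{n,k})$, proving $\rho(F)\le\rho(L_{n,k})$ and $q(F)\le q(L_{n,k})$. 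For equality, if $\rho(F)=\rho(L_{n,k})$ then $\rho(\widetilde F)=\rho(L_{n,k})$, so by Perron--Frobenius $\widetilde F=L_{n,k}$; since Kelmans operations preserve the edge count, $e(F)=e(\widetilde F)=\binom{n-k}{2}+\binom{k+1}{2}$. On the other hand, writing $k_i=|V(H_i)|$ with $\sum_i k_i=k$, we have $e(F)=\binom{n-k}{2}+\sum_i\bigl(\binom{k_i}{2}+k_i\bigr)=\binom{n-k}{2}+\sum_i\binom{k_i+1}{2}$, and from $\binom{a+1}{2}+\binom{b+1}{2}+ab=\binom{a+b+1}{2}$ this is at most $\binom{n-k}{2}+\binom{k+1}{2}$, with equality only when $m=1$ and $k_1=k$; that is precisely $F\cong L_{n,k}$. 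The same chain of inequalities with $q$ gives (b).

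The only steps requiring genuine care are (i) checking that the re-attachment $F[w_j\rightarrow w_i]$ really stays inside $\mathcal{F}_{n,k}$ — this hinges on $S$ being a clique, which is exactly what prevents any vertex of $S$ other than $w_j$ from lying in $N_F(w_j)\setminus(N_F(w_i)\cup\{w_i\})$ — and (ii) obtaining strictness in the equality case, which I deliberately route through the fact that Kelmans operations preserve $e(\cdot)$ together with the easy edge bound for $\mathcal{F}_{n,k}$, rather than trying to decide when Csikv\'ari's inequality is strict. I expect (i) to be the main thing to write out precisely; the rest is bookkeeping.
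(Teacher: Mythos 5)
Your proposal is correct and follows essentially the same route as the paper's proof: reduce to a member $F\in\mathcal{F}_{n,k}$, consolidate all attachment vertices into one by Kelmans operations (using Theorems \ref{Thm:C09} and \ref{Thm:LN}), observe the resulting graph is a subgraph of $L_{n,k}$, and conclude via the chain $\rho(G)\le\rho(F)\le\rho(\widetilde F)\le\rho(L_{n,k})$ (and likewise for $q$). The only difference is that you justify the equality case carefully through edge-count preservation and the superadditivity of $\binom{k_i+1}{2}$, whereas the paper simply asserts that equality forces $G=F=F'\cong L_{n,k}$; this is a welcome refinement but not a different method.
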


\begin{proof}
(a) Let $F\in\mathcal{F}_{n,k}$ with $G\subseteq F$. Since
$G\subseteq F$, $\rho(G)\leq\rho(F)$, with equality if and only
if $G=F$ (recall that $F$ is connected). Let $K$
be the complete subgraph of $F$ with $|K|=n-k$.
Let $H_1,H_2,\ldots,H_t$ be the components of $F-K$, and let $v_i$,
$i\in[1,t]$, be the unique vertex in $N(H_i)\cap V(K)$. By
a series of Kelmans operation from $v_i$ to $v_1$ for all $v_i\neq
v_1$, we get a graph $F'$ which is a subgraph of $L_{n,k+1}$. By
Theorem \ref{Thm:C09},
$$\rho(G)\leq\rho(F)\leq\rho(F')\leq\rho(L_{n,k}),$$
equality holds if and only if $G=F=F'\cong L_{n,k}$. This proves the
statement (a).

(b) The proof is almost the same as the one of (a). We just use Theorem
\ref{Thm:LN} instead of Theorem \ref{Thm:C09} in the whole proof.
We omit the details.
\end{proof}

\begin{lem}\label{Lem:compared}
Let $n,k$ be integers where $k\geq 1$. Then\\
(a) $\rho(L_{n,k})>\rho(L_{n,k+1})$  for $n\geq 2k+4$;
$\rho(F_{n,1})>\rho(\varGamma_{n,2})$ for $n\geq 6$;
$\rho(F_{n,2})>\rho(\varGamma_{n,3})$ for $n\geq 4$.\\
(b) $q(L_{n,k})>q(L_{n,k+1})$ for $n\geq 2k+4$;
$q(F_{n,1})>q(\varGamma_{n,2})$ for $n\geq 6$;
$q(F_{n,2})>q(\varGamma_{n,3})$ for $n\geq 1$.
\end{lem}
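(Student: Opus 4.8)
The plan is to reduce all six inequalities to comparisons between characteristic polynomials of small nonnegative matrices. Every graph occurring in the statement admits an equitable vertex partition with at most three classes: for $F_{n,j}=L_{n,j}=K_1\vee(K_{n-j-1}+K_j)$ the classes are the apex, $V(K_{n-j-1})$ and $V(K_j)$; for $\varGamma_{n,2}=K_2\vee(K_{n-4}+2K_1)$ they are $V(K_2)$, $V(K_{n-4})$ and the two degree-two vertices; and for $\varGamma_{n,3}=K_2\vee(K_{n-5}+3K_1)$ they are $V(K_2)$, $V(K_{n-5})$ and the three degree-two vertices. Hence $\rho(G)$ equals the largest eigenvalue of the quotient matrix $B_G$ of $A(G)$, and $q(G)$ equals the largest eigenvalue of $B_G+\operatorname{diag}(d_1,d_2,d_3)$, where $d_i$ is the common degree of a vertex in class $i$; all of these matrices are nonnegative and irreducible. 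First I would write down these five quotient matrices in each of the two settings and expand their characteristic polynomials $\phi_G(x)$, obtaining cubics (or quadratics when a class is empty) with coefficients polynomial in $n$.

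The key step is the Perron--Frobenius fact that for a nonnegative irreducible matrix $M$ one has $\det(xI-M)>0$ for all real $x>\rho(M)$, so $\rho(M)$ is the largest real root of $\det(xI-M)$. Therefore, to establish $\rho(G_1)>\rho(G_2)$ it is enough to show $\phi_{G_1}(\rho(G_2))<0$; since $\phi_{G_2}(\rho(G_2))=0$ this quantity equals $(\phi_{G_1}-\phi_{G_2})(\rho(G_2))$, and the difference $\phi_{G_1}-\phi_{G_2}$ has degree at most two. For the pair $L_{n,k},L_{n,k+1}$ a direct expansion gives the clean identities $\phi_{L_{n,k+1}}(x)-\phi_{L_{n,k}}(x)=(n-2k-2)(x+2)$ in the adjacency case and $4(n-2k-2)\bigl(x-(n-2)\bigr)$ in the signless-Laplacian case, so $\phi_{L_{n,k}}(\rho(L_{n,k+1}))=-(n-2k-2)\bigl(\rho(L_{n,k+1})+2\bigr)<0$ for $n\ge 2k+3$, and likewise $\phi^{Q}_{L_{n,k}}(q(L_{n,k+1}))<0$ because $q(L_{n,k+1})>\Delta(L_{n,k+1})=n-1>n-2$. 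For the two pairs involving $\varGamma_{n,2}$ and $\varGamma_{n,3}$ the difference is a genuine quadratic in the Perron root of $\varGamma$; here I would feed in crude a priori bounds, using $\varGamma_{n,2}\supseteq K_{n-2}$ and $\varGamma_{n,3}\supseteq K_{n-3}$ (so that $\rho(\varGamma_{n,2})\ge n-3$, $q(\varGamma_{n,2})\ge 2n-6$, $\rho(\varGamma_{n,3})\ge n-4$, $q(\varGamma_{n,3})\ge 2n-8$) together with $\rho<\Delta$ and $q\le 2\Delta$, and then check that the quadratic is negative throughout the admissible interval for every $n$ in the stated range; the adjacency comparison for $\varGamma_{n,2}$ is in fact immediate, since $\phi_{L_{n,1}}-\phi_{\varGamma_{n,2}}$ evaluates to $-(x-1)^2+18-3n$.

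The step I expect to be the main obstacle is this last one. Unlike the $L_{n,k}$ versus $L_{n,k+1}$ comparison, the $\varGamma$-comparisons do not collapse to a single linear factor, so one must carry out, without arithmetic slips, the bookkeeping of the quotient matrices and vertex degrees in both the $A$- and the $Q$-setting, expand the resulting six pairs of cubics, and verify the ensuing quadratic inequalities on the precise ranges of $n$ — in particular determining the smallest admissible $n$, which need not be the same for all six inequalities, so the few smallest values in each of the $\varGamma$-cases have to be disposed of by a direct numerical check. No idea beyond the equitable-partition reduction and Perron--Frobenius positivity of characteristic polynomials is required; the remainder is computation.
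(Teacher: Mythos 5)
Your proposal is correct in substance but follows a genuinely different route from the paper. The paper splits the lemma into two unrelated arguments: for $\rho(L_{n,k})>\rho(L_{n,k+1})$ (and the $q$-analogue) it rewires the edges at one vertex of the small clique onto the large clique and compares Rayleigh quotients, using the eigenequation of the Perron vector of $L_{n,k+1}$ to see that the eigencomponent on the big clique exceeds that on the small one; for the two $\varGamma$-comparisons it does not touch eigenvectors at all, but simply bounds $\rho(\varGamma_{n,2})\le\sqrt{2e-n+1}$ (Hong) and $q(\varGamma_{n,2})\le\frac{2e}{n-1}+n-2$ (Das) and checks these fall below $\rho(K_{n-1})=n-2$, resp.\ $q(K_{n-1})=2(n-2)$, where $K_{n-1}\subsetneq F_{n,1}$ (similarly with $K_{n-2}\subsetneq F_{n,2}$ for $\varGamma_{n,3}$). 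You instead treat all six inequalities uniformly through equitable partitions: the reduction to $3\times 3$ quotient matrices is valid (for an equitable partition the Perron root of the graph, for both $A$ and $Q=A+D$, is the Perron root of the quotient), the positivity of $\det(xI-M)$ above the spectral radius is the right comparison device, and your two displayed identities are correct --- I checked that $\phi_{L_{n,k+1}}-\phi_{L_{n,k}}=(n-2k-2)(x+2)$, that the $Q$-analogue is $4(n-2k-2)\bigl(x-(n-2)\bigr)$, and that $\phi_{L_{n,1}}-\phi_{\varGamma_{n,2}}=-(x-1)^2+18-3n$; the remaining three differences (e.g.\ $-x^2+(12-n)x+2n^2-18n+36$ for the $Q$-comparison with $\varGamma_{n,2}$ and $-3x^2+5nx-2n^2-2n+36$ for the $Q$-comparison with $\varGamma_{n,3}$) are indeed negative on the intervals produced by your crude bounds $\rho(\varGamma)\ge n-3$ (resp.\ $n-4$), $q(\varGamma)\ge 2n-6$ (resp.\ $2n-8$) once $n$ is moderately large, with the few smallest $n$ handled numerically as you say. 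What the paper's route buys is brevity: the $\varGamma$-cases are one line each and no characteristic polynomials are expanded; what your route buys is uniformity (one mechanism for all six statements and for both $A$ and $Q$), independence from the external inequalities of Hong and Das, and exact linear or quadratic error terms that make the admissible ranges of $n$ transparent. The only caveat is that three of the six comparisons are left as outlined computations rather than carried out, but the outline is sound and the bounds you propose do suffice, so there is no gap in the method.
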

\begin{proof}
(a) Let $V(L_{n,k+1})=X\cup Y\cup \{z\}$, where $X\cup \{z\}$ is the $(k+2)$-clique
in $L_{n,k+1}$ and $Y\cup \{z\}$ is the $(n-k-1)$-clique in $L_{n,k+1}$.
Choose $x\in X$. $L_{n,k}$ can be obtained from $L_{n,k+1}$
by deleting all edges $xx'$ for $x'\in X$ and adding all edges $xy'$ for $y'\in Y$.

Let $M$ be the Perron vector with respect
to $\rho(L_{n,k+1})$, where $x,y,w$ correspond to the eigencomponent of vertices
in $X$, the vertices in $Y$ and the vertex $z$. Let $\rho_1:=\rho(L_{n,k+1})$.
By eigenequation, we have $\rho_1x=kx+z$ and $\rho_1y=(n-k-3)y+z$. It follows
that $(\rho_1-k)x=(\rho_1-(n-k-3))y$. Since $n\geq 2k+4$, we have $y>x$.
Then by Rayleigh quoit,
we have
\begin{align*}
\rho(L_{n,k})-\rho(L_{n,k+1})&\geq 2(n-k-2)xy-2kx^2=2x((n-k-2)y-kx>0.
\end{align*}
This proves $\rho(L_{n,k})>\rho(L_{n,k+1})$ for $n\geq 2k+4$.

Let $M'$ be the Perron vector with respect
to $q(L_{n,k+1})$, where $x,y,w$ correspond to the eigencomponent of vertices
in $X$, the vertices in $Y$ and the vertex $z$. Let $q_1:=q(L_{n,k+1})$.
By eigenequation, we have $q_1x=(2k+1)x+z$ and $\rho_1y=(2n-2k-5)y+z$. It follows
that $(q_1-(2k+1))x=(q_1-(2n-2k-5))y$. If $n\geq 2k+4$, then $y>x$.
Then by Rayleigh quoit,
we have
\begin{align*}
q(L_{n,k})-q(L_{n,k+1})&\geq (n-k-2)(x+y)^2-k(x+x)^2>0.
\end{align*}
This proves $q(L_{n,k})>q(L_{n,k+1})$ for $n\geq 2k+4$.

(b) $\rho(\varGamma_{n,2})\leq \sqrt{2e(\varGamma_{n,2})-n+1}=\sqrt{n^2-6n+15}<n-2=\rho(K_{n-1})<\rho(F_{n,1})$
for $n\geq 6$.
$q(\varGamma_{n,2})\leq \frac{2e(\varGamma_{n,2})}{n-1}+n-2\leq 2(n-2)=q(K_{n-1})<q(F_{n,1})$ for $n\geq 6$.

(c) $\rho(\varGamma_{n,3})\leq \sqrt{2e(\varGamma_{n,3})-n+1}=\sqrt{n^2-8n+25}<n-3=\rho(K_{n-2})<\rho(F_{n,2})$
for $n\geq 4$.
$q(\varGamma_{n,2})\leq \frac{2e(\varGamma_{n,2})}{n-1}+n-2\leq 2(n-3)=q(K_{n-2})<q(F_{n,2})$.
\end{proof}

\noindent
{\bf Proof of Theorem \ref{Thm:Spectrallargecylce}.} If $G$ is
disconnected, then we can add some edges between
different components recursively, and get a connected graph $G'$ with
$\rho(G')>\rho(G)$ and $q(G')>q(G)$. Since the added edges are
not contained in any cycle, if $G'$ contains some cycles, then so
does $G$. Thus we only deal with the case that $G$ is connected.

Suppose that (a) holds. Furthermore, suppose that $G$
does not contains a $C_{\ell}$ for every $\ell\in [3,n-k+1]$.
We shall show that $G=L_{n,k}$.

By Theorem \ref{Thm:Hong}, we have
$$\sqrt{2e(G)-n+1}\geq \rho(G)\geq \rho(L_{n,k+1})\geq n-k-1.$$
It follows that $2e(G)\geq(n-k-1)^2+n-1$. Note that
$$\frac{(n-k-1)^2+n-1}{2}\geq \binom{n-k-1}{2}+\binom{k+2}{2}$$
for $n\geq \frac{(k+2)^2}{2}$. By Theorem \ref{Thm:LiNing-stability},
$G$ is weakly pancyclic with girth 3 for
$n\geq \max\{6k+11,\frac{(k+3)(k+4)}{2}\}$. Furthermore, if $G$ does not
contain a $C_{n-k+1}$, then one of the following is true: (1)
$G\subseteq F$ for some $F\in \mathcal{F}_{n,k}$; (2) $G=L_{n,k+1}$;
(3) $k=1$ and $G\subseteq\varGamma_{n,2}$, or $k=2$ and
$G\subseteq\varGamma_{n,3}$.
By Lemma \ref{Lem:extremal} and Lemma \ref{Lem:compared}, $G=L_{n,k}$.

Suppose that (b) holds. By Theorem \ref{Thm:Das}, we obtain
$$\frac{2e(G)}{n-1}+n-2\geq q(G)\geq q(F_{k+1})\geq 2(n-k-1),$$
which implies that $e(G)\geq \frac{n^2-(2k+1)n+2k}{2}$. Note that
$\frac{n^2-(2k+1)n+2k}{2}\geq \binom{n-k-1}{2}+\binom{k+2}{2}$ for
$n\geq k^2+2k+2$. By Theorem \ref{Thm:LiNing-stability}, $G$ is weakly
pancyclic with girth 3. Furthermore, if $G$ does not contain a
$C_{n-k+1}$, then one of the following is true: (1) $G\subseteq F$
for some $F\in \mathcal{F}_{n,k}$; (2) $G=L_{n,k+1}$; (3) $k=1$ and
$G\subseteq\varGamma_{n,2}$, or $k=2$ and
$G\subseteq\varGamma_{n,3}$. By Lemma \ref{Lem:extremal}
and Lemma \ref{Lem:compared}, $G=L_{n,k}$.

The proof is complete.
\hfill{\rule{4pt}{8pt}}

\section{One open problem of Nikiforov }\label{Sec:Nikiforov}
This section is devoted to an open problem by Nikiforov \cite{N08}.
Before the proof, we collect various results that will be used in our arguments.

We first prove one edge condition for even cycles.

\begin{thm}\label{Thm:Erdos-Gall-Voss}
Let $G$ be a graph on $n$ vertices and $e(G)$ edges.
If $G$ contains no even cycle of length more than $2k$, where
$k\geq 1$ is an integer, then $e(G)\leq \frac{(2k+1)(n-1)}{2}$.
\end{thm}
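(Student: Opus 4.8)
The plan is to induct on $n$ and reduce to a $2$-connected "block" analysis via a theorem of Bondy--Simonovits / Erd\H{o}s--Gallai type. First I would handle the base cases and the disconnected case: if $G$ is disconnected, say with components $G_1,\dots,G_r$ on $n_1,\dots,n_r$ vertices, then applying induction to each gives $e(G)=\sum_i e(G_i)\le \sum_i \frac{(2k+1)(n_i-1)}{2}\le \frac{(2k+1)(n-1)}{2}$, since $\sum_i (n_i-1)\le n-1$ when $r\ge 1$. So I may assume $G$ is connected, and similarly, by looking at the block-cut tree, that $G$ is $2$-connected (the additive bound $\frac{2k+1}{2}(|B|-1)$ over blocks $B$ telescopes along the tree, again because $\sum_B(|B|-1)=n-1$ for a connected graph). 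The hypothesis "no even cycle of length more than $2k$" is inherited by every block.

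Now the heart of the matter is the $2$-connected case: a $2$-connected graph $H$ on $m$ vertices with no even cycle longer than $2k$ satisfies $e(H)\le \frac{(2k+1)(m-1)}{2}$. Here I would exploit the fact that in a $2$-connected graph the set of cycle lengths is controlled — in particular, by a classical result (Bondy--Vince, or the Erd\H{o}s--Gallai/Kopylov circumference machinery), a $2$-connected graph with many edges contains long cycles of *both* parities, or else has a very restricted structure. Concretely, if $H$ has circumference $c$ and $e(H)>\frac{(2k+1)(m-1)}{2}$, I want to force an even cycle of length $> 2k$. One route: if $c\ge 2k+2$, then either $H$ already has an even cycle of length in $[2k+2,c]$, or all cycles near the top length are odd, in which case a short detour / chord argument (standard for $2$-connected graphs, e.g. via an ear or a path between two vertices of a longest cycle) produces a cycle of length one less or one more, hence an even one $\ge 2k+2$. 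So it suffices to bound $e(H)$ when $c\le 2k+1$, i.e. the circumference is at most $2k+1$; and for circumference at most $\ell$ the Erd\H{o}s--Gallai bound gives $e(H)\le \frac{\ell(m-1)}{2}\le \frac{(2k+1)(m-1)}{2}$, which is exactly what we need. I expect the authors in fact cite or reprove an Erd\H{o}s--Gallai--Voss type statement directly for even circumference, so I would look for the cleanest such lemma and plug it in.

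The main obstacle is the parity bridging step in the $2$-connected case: turning "circumference $\ge 2k+2$" into "even cycle of length $\ge 2k+2$." The subtlety is that a $2$-connected graph can have all its long cycles odd (e.g.\ think of graphs built from odd cycles), so one genuinely needs that high edge density — not just $2$-connectivity — forces an even long cycle. I would handle this by a contradiction/extremal argument: take $H$ edge-maximal among $2$-connected graphs on $m$ vertices with no even cycle $>2k$; show that such an $H$ must have circumference exactly $2k+1$ or a sparse tree-like structure with small blocks, and in either case count edges. If the edge-maximal $H$ has a vertex $v$ with $d(v)\ge k+1$ whose neighborhood lies on a common cycle (available by $2$-connectivity and Menger), two neighbors at the right distance along that cycle yield an even cycle of controlled length; pushing this gives $d(v)\le$ something like $\frac{2k+1}{?}$ on average, closing the count.

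\medskip

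Finally, I would assemble: disconnected $\Rightarrow$ sum over components; connected $\Rightarrow$ sum over blocks in the block-cut tree, each of size $|B|$ with $e(B)\le\frac{(2k+1)(|B|-1)}{2}$ by the $2$-connected case (a single edge being the trivial block, for which $1\le\frac{2k+1}{2}\cdot 1$ holds since $k\ge 1$); then $e(G)=\sum_B e(B)\le \frac{2k+1}{2}\sum_B(|B|-1)=\frac{(2k+1)(n-1)}{2}$, using the identity $\sum_B(|B|-1)=n-1$ for connected $G$. This completes the argument modulo the $2$-connected lemma, which I expect to be the one nontrivial ingredient and the place where the even-cycle hypothesis (as opposed to a bare circumference bound) is actually used.
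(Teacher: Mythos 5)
Your reduction to the $2$-connected case (components, then blocks, using $\sum_B(|B|-1)=n-1$) is fine and matches the spirit of the paper's induction, but the heart of the theorem --- the $2$-connected lemma --- is left as a black box, and the route you sketch toward it does not work. The claim that circumference $c\ge 2k+2$ together with $2$-connectivity and a ``chord/detour'' argument yields an even cycle of length at least $2k+2$ is false: the odd cycle $C_{2k+3}$ is $2$-connected, has circumference $2k+3\ge 2k+2$, and contains no even cycle at all. You acknowledge that edge density must enter, but your proposed fix (edge-maximality, a vertex $v$ with $d(v)\ge k+1$ whose neighborhood lies on a common cycle, ``$d(v)\le$ something like $\tfrac{2k+1}{?}$'') contains a literal placeholder and no argument that actually converts density into a long \emph{even} cycle; moreover, restricting to a block loses any minimum-degree information you might have had in $G$, so even a correct degree-based lemma could not be applied inside blocks without an extra idea.

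The paper closes exactly this hole differently: it inducts on $n$ by deleting a vertex of degree at most $k$ (if $\delta(G)\le k$, then $e(G)\le e(G-v)+k$ and induction finishes), so in the remaining case $\delta(G)\ge k+1\ge 3$; if $G$ is then $2$-connected, the Voss--Zuluaga theorem (every $2$-connected graph with $\delta\ge r\ge 3$ on at least $2r+1$ vertices contains an even cycle of length at least $2r$) gives an even cycle of length $\ge 2k+2$, a contradiction, while cut vertices and disconnectedness are handled by induction as in your telescoping step. Two further cases you do not address are treated separately there: $k=1$ (no even cycles at all), done via the theta-graph lemma showing every component is an edge or an odd cycle, and $n=2k+2$, done via Ore's bound for non-Hamiltonian graphs. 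So the missing ingredient in your proposal is precisely a Voss--Zuluaga-type statement (or a proof of it) together with a mechanism --- such as the paper's delete-a-low-degree-vertex induction --- for securing the minimum-degree hypothesis before applying it; without these, the parity-bridging step is a genuine gap, not a routine detail.
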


\begin{thm}[Voss and Zuluaga \cite{VZ77}]\label{Thm:Voss-Zuluage}
(1) Every 2-connected graph $G$ with $\delta(G)\geq r\geq 3$ having at least $2r+1$
vertices contains an even cycle of length at least $2r$.
(2) Every 2-connected non-bipartite graph $G$ with $\delta(G)\geq r\geq 3$ having at least $2r+1$
vertices contains an odd cycle of length at least $2r-1$.
\end{thm}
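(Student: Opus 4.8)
The plan is to prove the stated implication directly by strong induction on $n$, writing $f(n):=\frac{(2k+1)(n-1)}{2}$ for the target bound. The decisive observation is that $f$ is \emph{additive along cut vertices}: if $w$ is a cut vertex splitting $G$ into $G_1,G_2$ with $G_1\cup G_2=G$, $V(G_1)\cap V(G_2)=\{w\}$ and $n_i:=|V(G_i)|$, then $n_1+n_2-1=n$ and $e(G)=e(G_1)+e(G_2)$, so $f(n_1)+f(n_2)=\frac{2k+1}{2}(n_1+n_2-2)=f(n)$. Since every cycle of $G$ lies inside a single block, each $G_i$ (a subgraph of $G$) still has no even cycle longer than $2k$; hence if $G$ is disconnected or has a cut vertex, applying the induction hypothesis to the strictly smaller pieces and summing gives $e(G)\le f(n)$ at once (for the disconnected case $\sum_i f(n_i)=\frac{2k+1}{2}(n-c)\le f(n)$ with $c\ge 2$ components). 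This reduces everything to $G$ being $2$-connected, the cases $G=K_1,K_2$ being trivial against $f(1)=0$ and $f(2)=\frac{2k+1}{2}\ge 1$.

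For $2$-connected $G$ I would first dispose of small orders: if $n\le 2k+1$ then $e(G)\le\binom{n}{2}=\frac{n(n-1)}{2}\le\frac{(2k+1)(n-1)}{2}=f(n)$, so I may assume $n\ge 2k+2$. Now comes the key dichotomy on the minimum degree. If $\delta(G)\le k$, I delete a vertex $v$ of minimum degree; the graph $G-v$ has $n-1$ vertices and no even cycle longer than $2k$, so the induction hypothesis yields $e(G)=e(G-v)+\deg(v)\le f(n-1)+k=f(n)-\frac12<f(n)$. It therefore remains to rule out $\delta(G)\ge k+1$, which is exactly where the hypothesis forbidding long even cycles must be spent.

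To force a contradiction from $\delta(G)\ge k+1$ I split on the order, assuming $k\ge 2$. If $n\ge 2k+3$, then $\delta(G)\ge k+1\ge 3$ and $n\ge 2(k+1)+1$, so part (1) of the Voss--Zuluaga theorem (Theorem~\ref{Thm:Voss-Zuluage}) applied with $r=k+1$ produces an even cycle of length at least $2(k+1)=2k+2>2k$, contradicting the hypothesis. If instead $n=2k+2$, then $\delta(G)\ge k+1=\frac{n}{2}$, so $G$ is Hamiltonian (Dirac's theorem), and its Hamilton cycle is an even $C_{2k+2}$ of length $>2k$, again impossible. Hence for $k\ge 2$ the case $\delta(G)\ge k+1$ cannot occur, which completes the $2$-connected step.

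The remaining obstacle is the degenerate parameter $k=1$, where Voss--Zuluaga (needing $r\ge 3$) is unavailable and a $2$-connected graph always has $\delta\ge 2>k$. Here ``no even cycle longer than $2$'' means simply ``no even cycle,'' and I would invoke the classical structural fact that a $2$-connected graph containing no even cycle is a single odd cycle (a $2$-connected graph that is not a cycle contains a theta subgraph, which always carries an even cycle). For $G=C_n$ with $n$ odd one has $e(G)=n\le\frac{3(n-1)}{2}=f(n)$ for every $n\ge 3$, closing this case. I expect the genuine work to be this $2$-connected analysis: the boundary order $n=2k+2$ (the one value where the bare edge count $\binom{n}{2}$ would overshoot $f(n)$, so the long-even-cycle hypothesis is truly needed) and the parameter $k=1$; by contrast the reduction to blocks and the minimum-degree induction are routine once the additivity of $f$ is in hand.
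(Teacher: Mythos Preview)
Your proposal does not address the stated theorem at all. Theorem~\ref{Thm:Voss-Zuluage} is the Voss--Zuluaga result on long even and odd cycles in $2$-connected graphs; the paper quotes it from \cite{VZ77} and gives no proof. What you have written is a proof of Theorem~\ref{Thm:Erdos-Gall-Voss} (the Erd\H{o}s--Gallai type bound $e(G)\le\frac{(2k+1)(n-1)}{2}$ for graphs without even cycles longer than $2k$), and you invoke Theorem~\ref{Thm:Voss-Zuluage} as a black box in the course of it. Read as an attempt at Theorem~\ref{Thm:Voss-Zuluage}, the argument is therefore circular.

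If the intended target was in fact Theorem~\ref{Thm:Erdos-Gall-Voss}, then your proof is correct and essentially identical to the paper's. Both proceed by induction on $n$, peel off a vertex of minimum degree when $\delta(G)\le k$, and for $\delta(G)\ge k+1$ reduce to the $2$-connected case where Voss--Zuluaga (for $k\ge 2$, $n\ge 2k+3$) forces an even cycle of length $\ge 2k+2$; both treat $k=1$ via the theta-graph observation (Lemma~\ref{Lem:Theta}). The only cosmetic differences are that you organise the block/cut-vertex reduction upfront via the additivity $f(n_1)+f(n_2)=f(n)$, and at the boundary $n=2k+2$ you use Dirac's theorem (after already knowing $\delta\ge k+1$) where the paper uses Ore's theorem (Theorem~\ref{Thm:Ore}) directly on the edge count; these come to the same thing.
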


\begin{thm}[Ore \cite{O61}]\label{Thm:Ore}
Let $G$ be a graph on $n$ vertices. If $G$ contains no Hamilton cycle, then
$e(G)\leq \binom{n-1}{2}+1$.
\end{thm}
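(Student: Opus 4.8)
The plan is to reduce to the $n$-closure and exploit its defining degree condition, then finish with a single convexity estimate on the degree sequence. Write $C := cl_n(G)$ and note $e(G)\le e(C)$, since the closure only adds edges. Because $G$ has no Hamilton cycle, $c(G)=c(C)<n$ by Lemma~\ref{Lem:BonChv}, so $C$ is non-Hamiltonian as well; in particular $C\ne K_n$ (we assume $n\ge 3$, as is implicit in speaking of a Hamilton cycle). By the very definition of the $n$-closure, once the process terminates no two non-adjacent vertices have degree sum at least $n$; hence every non-adjacent pair $x,y$ of $C$ satisfies $d_C(x)+d_C(y)\le n-1$. It therefore suffices to bound $e(C)$ for a graph $C$ on $n$ vertices, distinct from $K_n$, in which all non-adjacent pairs have degree sum at most $n-1$.

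Next I would pick a vertex $u$ of minimum degree $\delta:=\delta(C)$ and split the remaining vertices into $N(u)$ and the set $W$ of non-neighbours of $u$, so that $|W|=n-1-\delta$. The degree-sum condition gives $d_C(w)\le n-1-\delta$ for every $w\in W$, while trivially $d_C(x)\le n-1$ for $x\in N(u)$. Summing degrees,
\begin{align*}
2e(C) &= d_C(u)+\sum_{x\in N(u)}d_C(x)+\sum_{w\in W}d_C(w)\\
&\le \delta+\delta(n-1)+(n-1-\delta)^2 =: g(\delta).
\end{align*}
The crucial point — and the only place where something can go wrong — is that this estimate is useless for large $\delta$ (indeed $g(n-2)=(n-1)^2$ exceeds the target), so I must first bound $\delta$ from above. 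This comes for free from the same degree condition: if we had $\delta\ge n/2$, then any non-adjacent pair (one exists since $C\ne K_n$) would have degree sum $\ge 2\delta\ge n$, contradicting the bound $n-1$. Hence $\delta\le (n-1)/2$; moreover $\delta\ge 1$, since an isolated vertex is never joined to anything by the closure, and in that case $e(C)\le\binom{n-1}{2}$ already.

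Finally I would observe that $g(\delta)=n\delta+(n-1-\delta)^2$ is a convex (upward) parabola, so over the interval $1\le \delta\le (n-1)/2$ its maximum occurs at an endpoint; a short comparison of the distances from the vertex $\delta=(n-2)/2$ shows the maximum is attained at $\delta=1$ for all $n\ge 5$, while the tiny cases $n=3,4$ force $\delta=1$ directly. Since $g(1)=n+(n-2)^2=n^2-3n+4=2\bigl(\binom{n-1}{2}+1\bigr)$, this yields $e(G)\le e(C)\le\binom{n-1}{2}+1$, as required. The bound is sharp, matched by $K_{n-1}$ together with a pendant edge, whose minimum-degree vertex realises exactly the $\delta=1$ endpoint of the estimate; this also explains why there is no slack and why the upper bound on $\delta$ is the genuine crux rather than the routine counting. (An alternative route avoiding the closure is to pass to an edge-maximal non-Hamiltonian supergraph and derive the same degree-sum condition from the classical Ore path-crossing argument — a Hamilton $u$–$v$ path whose closing chords cannot cross forces $d(u)+d(v)\le n-1$ — but the closure supplies this for free via Lemma~\ref{Lem:BonChv}.)
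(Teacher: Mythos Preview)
The paper does not supply a proof of this theorem; it is quoted as a classical result of Ore and used as a black box in the proof of Theorem~\ref{Thm:Erdos-Gall-Voss}. Your argument is correct: passing to the $n$-closure $C$ and invoking Lemma~\ref{Lem:BonChv} gives $c(C)<n$, hence $C\ne K_n$ and every non-adjacent pair in $C$ has degree sum at most $n-1$; your deduction that $\delta(C)\le(n-1)/2$ (from the existence of at least one such pair) is sound; and the convexity of $g(\delta)=n\delta+(n-1-\delta)^2$ on $[1,(n-1)/2]$, with vertex at $(n-2)/2$ and hence maximum at $\delta=1$ for $n\ge 5$, yields $2e(C)\le g(1)=n^2-3n+4=2\bigl(\binom{n-1}{2}+1\bigr)$ as required. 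The small cases and the isolated-vertex case are handled correctly. The alternative you mention at the end --- working in an edge-maximal non-Hamiltonian supergraph and using Ore's path-crossing argument to get the same degree-sum condition --- is essentially the classical proof; your closure route is an equally clean packaging of it.
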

A graph is called a \emph{theta graph} if it consists of three paths starting and
ending with two same vertices and are internal-disjoint. The following
lemma is very basic.
\begin{lem}\label{Lem:Theta}
Let $G$ be a graph containing no theta graphs. Then each component of $G$
is an edge or a cycle.
\end{lem}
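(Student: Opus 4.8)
The plan is to reduce to the $2$-connected case and then invoke an ear decomposition. First I would observe that a theta graph is $2$-connected, so any theta subgraph of $G$ lies inside a single block of $G$; hence it suffices to show that every block of $G$ is a single edge or a cycle, and the statement then follows by reassembling the blocks (a theta subgraph, being $2$-connected, can never straddle a cut vertex). Thus I may assume $G$ itself is $2$-connected. If $|V(G)|\le 2$ then $G$ is a vertex or an edge and there is nothing to prove, so I assume $|V(G)|\ge 3$, in which case $G$ contains a cycle $C$.

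Next I would argue by contradiction: suppose $G\ne C$. Using $2$-connectivity, I would produce an \emph{ear} of $C$, meaning a path $P$ of length at least $1$ whose endpoints are two distinct vertices $u,v$ of $C$ and whose internal vertices (if any) avoid $C$. One clean way is to quote the open ear decomposition $G=C\cup P_1\cup\cdots\cup P_t$ of a $2$-connected graph, where $t\ge 1$ because $G\ne C$, and take $P=P_1$; alternatively, pick any vertex or edge of $G$ not lying on $C$ and apply Menger's theorem to obtain two internally disjoint paths from it to $C$, whose concatenation yields such a $P$. Then the two arcs of $C$ from $u$ to $v$, together with $P$, constitute three internally disjoint $u$--$v$ paths, each of length at least $1$ (the arcs because $u\ne v$, and $P$ by construction). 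Hence $C\cup P$ is a theta subgraph of $G$, contradicting the hypothesis. Therefore $G=C$, a cycle, which completes the $2$-connected case.

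The only step that needs any care is the extraction of the ear $P$ with all the required disjointness, and that is precisely the point at which $2$-connectivity is used — through an ear decomposition, or equivalently through Menger's theorem. I expect this to be the main, and essentially the only, obstacle, and it is entirely standard with no computation involved; the block reduction and the rest are just bookkeeping.
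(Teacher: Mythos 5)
Your $2$-connected core is correct and standard: in a $2$-connected graph on at least three vertices that is not a cycle, an open ear (or Menger) produces three internally disjoint paths between two vertices of a cycle, i.e.\ a theta subgraph. (The paper gives no proof to compare with; it dismisses the lemma as ``very basic''.) The genuine gap is your reduction step. Your parenthetical remark that a theta subgraph, being $2$-connected, lies inside a single block only shows that the \emph{hypothesis} ``no theta subgraph'' passes down to blocks; it does nothing to lift the \emph{conclusion} back up from blocks to components. From ``every block of $G$ is an edge or a cycle'' you cannot conclude ``every component of $G$ is an edge or a cycle'': a path on three vertices, any tree with at least two edges, or two triangles sharing a vertex have all blocks equal to edges or cycles and contain no theta subgraph, yet none of these is an edge or a cycle.

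Moreover, this gap cannot be patched, because those same examples are counterexamples to the lemma as literally stated: the statement is only true with ``block'' in place of ``component''. What your ear-decomposition argument actually establishes is precisely that corrected block version, which is also what the application in the proof of Theorem~\ref{Thm:Erdos-Gall-Voss} requires (for a graph with no even cycle, every block is an edge or an odd cycle, whence $e(G)\leq n+c-1$ with $c$ the number of cycle blocks, and $c\leq\frac{n-1}{2}$ since each cycle block contributes at least two vertices beyond a cut vertex). So keep your $2$-connected argument, delete the claim that the component version follows by ``reassembling the blocks'', and state the conclusion at the level of blocks.
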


\noindent
{\bf Proof of Theorem \ref{Thm:Erdos-Gall-Voss}.}
If $n\leq 2k+1$, then $e(G)\leq \binom{n}{2}\leq \frac{(2k+1)(n-1)}{2}$.
If $n=2k+2$, then by Theorem \ref{Thm:Ore}, we have $e(G)\leq \binom{2k+1}{2}+1\leq \frac{(2k+1)(n-1)}{2}$.
Next, we assume $n\geq 2k+3$.

Let $k=1$. We shall prove that if a graph on $n$ vertices contains
no even cycles then $e(G)\leq \frac{3(n-1)}{2}$.
By Lemma \ref{Lem:Theta}, every component of $G$ is an edge or an odd cycle. Let $c$
be the number of components which are odd cycles. We use induction to prove that
$e(G)\leq n+c-1\leq n-1+\frac{n-1}{2}=\frac{3(n-1)}{2}$.
In the following, we suppose $k\geq 2$.

Let $v\in V(G)$ with $d_G(v)=\delta(G)$, and $G':=G-v$. Note that $G'$ satisfies that
$v(G')\geq2k+2$ and $G'$ contains no even cycle of length more than $2k$.
By induction hypothesis, if $d(v)\leq k$, then we have
$e(G)=e(G')+\delta\leq \frac{(2k+1)(n-2)}{2}+k<\frac{(2k+1)(n-1)}{2}$,
as required. Thus, $\delta(G)\geq k+1\geq 3$. If $G$ is 2-connected,
then by Theorem \ref{Thm:Voss-Zuluage},  $G$ contains an even cycle
of length at least $2k+2$, a contradiction. Thus, $G$ contains a cut-vertex
or is disconnected. For each case, we use induction to each component
and compute the number of edges. The proof is complete. \hfill{\rule{4pt}{8pt}}

The following spectral inequality was originally proposed by Guo, Wang and Li \cite{GWL19}
as a conjecture and proved by Sun and Das \cite{SD20}.
\begin{thm}[Sun and Das \cite{SD20}]\label{Thm:Sun-Das}
Let $G$ be a graph with minimum degree $\delta(G)\geq 1$. For any $v\in V(G)$,
we have $\rho^2(G-v)\geq\rho^2(G)-2d(v)+1$.
\end{thm}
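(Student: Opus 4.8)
\medskip
\noindent\textbf{Proof proposal.}
I may assume $G$ is connected: otherwise $\rho(G)$ is attained on one component $G_0$, and deleting $v$ either leaves $\rho$ unchanged (when $v\notin G_0$, making the claim trivial) or reduces the statement to $G_0$. So let $(\rho,x)$ be the Perron eigenpair of $A:=A(G)$, with $\rho=\rho(G)$, $x>0$ and $\|x\|=1$. Write $A':=A(G-v)$ and let $c\in\{0,1\}^{V\setminus\{v\}}$ be the indicator vector of $N(v)$, so $\|c\|^2=d(v)$. The plan is to pass to $A^2$, whose top eigenvalue is $\rho^2$. Let $M$ denote the principal submatrix of $A^2$ obtained by deleting the row and column of $v$. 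Counting walks of length two gives the identity $M=A'^2+cc^{\top}$, whence by Weyl's inequality
\[
\rho^2(G-v)=\lambda_1(A'^2)=\lambda_1\!\left(M-cc^{\top}\right)\ge \lambda_1(M)-\lambda_1\!\left(cc^{\top}\right)=\lambda_1(M)-d(v).
\]
Thus the whole theorem reduces to the single inequality $\lambda_1(M)\ge \rho^2-d(v)+1$; this Weyl step is already sharp on $K_n$ and on $P_3$.

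To lower-bound $\lambda_1(M)$ I would use two complementary devices. The first is the Rayleigh quotient at the truncated Perron vector $x':=x|_{V\setminus\{v\}}$. Using $A^2x=\rho^2 x$ and $(A^2)_{vv}=d(v)$ one computes $x'^{\top}Mx'=\rho^2-x_v^2\bigl(2\rho^2-d(v)\bigr)$ and $\|x'\|^2=1-x_v^2$, so the target $\lambda_1(M)\ge \rho^2-d(v)+1$ follows as soon as the Perron entry at $v$ obeys the localization bound $x_v^2(\rho^2-1)\le d(v)-1$. The eigen-equation $\rho x_v=\sum_{u\sim v}x_u$ together with Cauchy--Schwarz yields $x_v^2\le d(v)/(\rho^2+d(v))$, and a short manipulation shows this is exactly strong enough to give $x_v^2(\rho^2-1)\le d(v)-1$ precisely when $d(v)\ge\rho$. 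The second device is Cauchy interlacing: since $M$ is a one-step principal submatrix of $A^2$, one has $\lambda_1(M)\ge\lambda_2(A^2)$, which is effective in the opposite situation, when the top of the spectrum of $A$ is (nearly) degenerate --- this is what makes the bound sharp on $P_3$, where $\rho^2$ is a double eigenvalue of $A^2$.

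The main obstacle is therefore the low-degree regime $d(v)<\rho(G)$, where the Cauchy--Schwarz control of $x_v$ is too weak and interlacing alone need not suffice either. Here I would refine the localization estimate for $x_v$ by feeding the second-order relation $\bigl(\rho^2-d(v)\bigr)x_v=\sum_{u\ne v}|N(u)\cap N(v)|\,x_u$ back into the Rayleigh bound, exploiting $\delta(G)\ge1$ to ensure that the mass $\sum_{u\sim v}x_u$ is genuinely spread over the (at least one) neighbours rather than concentrated at $v$; the delicate point is to quantify how far the top eigenvector of $A^2$ can localize at a vertex of small degree. I expect the cleanest route is to interpolate between the two devices --- using interlacing when $\lambda_2(A^2)$ is close to $\rho^2$ and the sharpened localization bound otherwise --- and to close the analysis by pinning down the equality family, which the two extremes ($G=K_n$ with $d(v)=\rho$, and $G=P_3$ with $d(v)=1$) suggest consists exactly of the graphs for which the Weyl step and the lower bound on $\lambda_1(M)$ are simultaneously tight.
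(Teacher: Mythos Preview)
First, note that the paper does \emph{not} prove this theorem: it is quoted as a result of Sun and Das \cite{SD20} (settling a conjecture of Guo, Wang and Li) and then applied as a black box via Lemma~\ref{Lem:Sun-Das}. So there is no in-paper proof to compare against.

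As for your proposal itself, the reduction is sound as far as it goes. The identity $M=A'^2+cc^{\top}$ is correct, the Weyl step gives $\rho^2(G-v)\ge\lambda_1(M)-d(v)$, and the Rayleigh computation at the truncated Perron vector does reduce the remaining inequality $\lambda_1(M)\ge\rho^2-d(v)+1$ exactly to $x_v^2(\rho^2-1)\le d(v)-1$. Your Cauchy--Schwarz bound $x_v^2\le d(v)/(\rho^2+d(v))$ then indeed closes the case $d(v)\ge\rho$.

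The genuine gap is the regime $d(v)<\rho$, and you acknowledge this yourself. Interlacing gives $\lambda_1(M)\ge\lambda_2(A^2)$, but in general $\lambda_2(A^2)$ can be far below $\rho^2-d(v)+1$ (take any graph with a large spectral gap and delete a vertex of small degree), so this device is only useful in very special situations like $P_3$. The ``second-order'' refinement you sketch --- feeding $(\rho^2-d(v))x_v=\sum_{u\ne v}|N(u)\cap N(v)|\,x_u$ back in --- does not by itself produce the needed bound $x_v^2(\rho^2-1)\le d(v)-1$; in fact the worst case is precisely when $d(v)=1$, where the target becomes $x_v=0$, which is false whenever $G$ is connected. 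So the Rayleigh-at-$x'$ route cannot be salvaged for small $d(v)$, and you would need a genuinely different test vector or a different lower bound on $\lambda_1(M)$ there. The ``interpolate between the two devices'' plan is too vague to constitute a proof: you have not specified any threshold or weighting that covers the full range, and there are graphs (e.g.\ a long path with a leaf deleted) where neither device comes close on its own. In short, the high-degree half is clean, but the low-degree half is still open in your write-up; you would need to consult the actual Sun--Das argument to see how that case is handled.
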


By Theorem \ref{Thm:Sun-Das}, we deduce a result for graphs with isolated vertices.
\begin{lem}\label{Lem:Sun-Das}
Let $G$ be a graph. For any $v\in V(G)$, we have
$$\rho^2(G)\leq \rho^2(G-v)+2d(v).$$
\end{lem}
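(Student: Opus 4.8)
The plan is to reduce to Theorem \ref{Thm:Sun-Das} by handling the only case that theorem does not cover, namely when $G$ has vertices of degree zero. Let me think about what statement we actually need. Theorem \ref{Thm:Sun-Das} requires $\delta(G) \ge 1$ and concludes $\rho^2(G - v) \ge \rho^2(G) - 2d(v) + 1$, which is slightly stronger than what Lemma \ref{Lem:Sun-Das} asserts (the lemma drops the $+1$ and drops the minimum-degree hypothesis). So the content of the lemma is: extend the inequality to all graphs, at the mild cost of weakening $+1$ to $+0$.

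**The approach.** First I would dispose of the trivial case: if $d(v) = 0$, then $G - v$ is just $G$ with an isolated vertex removed, so $\rho(G-v) = \rho(G)$ and the inequality $\rho^2(G) \le \rho^2(G-v) + 0$ holds with equality. So assume $d(v) \ge 1$. Now the obstruction is that $G$ itself may still have \emph{other} isolated vertices, so Theorem \ref{Thm:Sun-Das} does not apply directly to $G$. The fix is to pass to the subgraph $G_0$ obtained from $G$ by deleting all isolated vertices. Since $v$ has positive degree, $v \in V(G_0)$, and $\rho(G_0) = \rho(G)$ because isolated vertices contribute only zero eigenvalues. Likewise, the isolated vertices of $G$ other than $v$ remain isolated in $G - v$, and deleting them does not change the spectral radius; more carefully, $(G - v)$ and $(G_0 - v)$ differ only by isolated vertices, so $\rho(G - v) = \rho(G_0 - v)$. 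Here I should be slightly careful that removing $v$ from $G_0$ might \emph{create} new isolated vertices (neighbours of $v$ that had degree $1$), but that is fine — we are comparing $\rho(G-v)$ with $\rho(G_0 - v)$, and both equal the spectral radius of the same graph with isolated vertices stripped.

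**Finishing.** Now $G_0$ has $\delta(G_0) \ge 1$ and $v \in V(G_0)$ with $d_{G_0}(v) = d_G(v) =: d(v)$, so Theorem \ref{Thm:Sun-Das} gives
\[
\rho^2(G_0 - v) \ge \rho^2(G_0) - 2d(v) + 1 > \rho^2(G_0) - 2d(v).
\]
Substituting $\rho(G_0) = \rho(G)$ and $\rho(G_0 - v) = \rho(G - v)$ and rearranging yields exactly
\[
\rho^2(G) \le \rho^2(G - v) + 2d(v),
\]
as claimed. I expect no real obstacle here — the only point requiring a moment's care is the bookkeeping that deleting isolated vertices (of $G$, or those newly created in $G_0 - v$) leaves the spectral radius unchanged, which follows since the adjacency matrix of a graph with isolated vertices is block-diagonal with a zero block, so its nonzero spectrum, and in particular its Perron root (which is $\ge 0$ always, and $> 0$ once there is at least one edge), agrees with that of the graph with those vertices removed. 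A degenerate sub-case worth a sentence: if $G - v$ (equivalently $G_0 - v$) has no edges at all, then $\rho(G-v) = 0$ and we need $\rho^2(G) \le 2d(v)$; but in that situation every edge of $G_0$ is incident to $v$, so $\rho(G_0) \le \sqrt{d(v)} \cdot$ (a constant) — actually cleanly, Theorem \ref{Thm:Sun-Das} still applies to $G_0$ and delivers the bound directly, so no separate argument is needed.
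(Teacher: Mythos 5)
Your proposal is correct and follows the same route the paper intends: the lemma is deduced from Theorem \ref{Thm:Sun-Das} by discarding the $+1$ and handling isolated vertices, which you do by the trivial case $d(v)=0$ and by passing to the graph $G_0$ obtained from $G$ by deleting isolated vertices, noting that stripping isolated vertices (in $G$, in $G-v$, or those created in $G_0-v$) does not change the spectral radius. Your bookkeeping is accurate, so nothing further is needed.
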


For a graph $G$, denote by $ec(G)$ the length of a longest even cycle of $G$ and
$oc(G)$ the length of a longest odd cycle of $G$.
\begin{thm}[Gould, Haxell and Scott \cite{GHS02}]\label{Thm:GHS}
For every real number $c>0$, there exists a constant $K:=K(c)=\frac{7.5\times 10^5}{c^5}$ depending only on $c$
such that the following holds. Let $G$ be a graph with $n\geq \frac{45K}{c^4}$ vertices
and minimum degree at least $cn$. Then $G$ contains a cycle of length $t$
for every even $t\in [4,ec(G)-K]$ and every odd $t\in [K,oc(G)-K]$.
\end{thm}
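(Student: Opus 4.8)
The plan is to fix a target parity and reduce the statement to producing a single cycle of each prescribed length by length-changing surgery on one longest cycle of that parity. First I would pass from $G$ to a cleaner host graph: since $\delta(G)\geq cn$, standard block-decomposition and degeneracy arguments yield a subgraph $H$ that is $2$-connected, satisfies $\delta(H)\geq c'n$ for some $c'=c'(c)>0$, and (for the odd-cycle conclusion) can be taken non-bipartite. The point is that $2$-connectivity permits flexible rerouting of paths while the linear minimum degree is preserved up to a constant factor; the vertices discarded here, together with the quantitative bounds below, are exactly what force both the threshold $n\geq 45K/c^{4}$ and the additive buffer $K=K(c)$.

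Next I would pin down the top of each length interval. Applying Theorem~\ref{Thm:Voss-Zuluage} to $H$ with $r=\lfloor c'n\rfloor$ produces an even cycle of length at least $2r$ and, in the non-bipartite case, an odd cycle of length at least $2r-1$; combined with the observation that deleting the discarded vertices changes $ec$ and $oc$ by at most an additive $K$, this yields a longest even cycle $C_{\mathrm{ev}}$ of length at least $ec(G)-K$ and a longest odd cycle $C_{\mathrm{odd}}$ of length at least $oc(G)-K$ living inside the well-connected host. These two cycles anchor the construction: every shorter cycle of the correct parity will be produced from them.

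The heart of the argument is a flexibility lemma: starting from one of these anchor cycles, I want to shorten it by exactly $2$ at a time, down to length $4$ in the even case and down to length $\Theta(K)$ in the odd case. Given a current cycle of length $\ell$, delete one edge to obtain a path $P$ with endpoints $u,v$; because $\delta(H)\geq c'n$, both $u$ and $v$ have $\Omega(n)$ neighbours, and a P\'osa-type rotation argument on $P$ produces $\Omega(n)$ distinct $u$--$v$ paths whose lengths fill an interval of consecutive integers of one fixed parity. Reclosing with the edge $uv$ turns these into cycles of all lengths in an interval of the correct parity, in particular a cycle of length $\ell-2$, and iterating fills in every even length down to $4$. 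The analogous non-bipartite rotation, which is where Theorem~\ref{Thm:Voss-Zuluage}(2) and the non-bipartiteness secured in the cleaning step are used, fills in every odd length down to $\Theta(K)$.

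I expect the flexibility lemma to be the main obstacle. The subtlety is not that many cycle lengths occur, but that they occur with \emph{no gaps} and with controlled parity; guaranteeing consecutiveness requires the rotations to produce attachment points that are genuinely dense along $P$ rather than clustered, which is precisely where the full strength of the linear minimum degree $cn$ (as opposed to $\omega(1)$ or even $n^{1-o(1)}$) is consumed. Making this quantitative---tracking how many vertices and how much degree are lost at each surgery, and bounding the unavoidable boundary effects near the two ends of each interval---is what produces the explicit buffer $K=7.5\times10^{5}/c^{5}$ and the vertex threshold $n\geq 45K/c^{4}$.
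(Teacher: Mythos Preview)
The paper does not prove this statement. Theorem~\ref{Thm:GHS} is quoted verbatim from Gould, Haxell and Scott \cite{GHS02} and used as a black box in the proof of Theorem~\ref{Thm:SpectraConsecutiveCycles}; the present paper supplies no argument for it. So there is no ``paper's own proof'' to compare your proposal against.

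As for the sketch itself, it is in the right spirit but contains a real gap. Your cleaning step passes from $G$ to a $2$-connected subgraph $H$, and you then assert that ``deleting the discarded vertices changes $ec$ and $oc$ by at most an additive $K$''. This is not justified and is not obviously true: a longest even (or odd) cycle of $G$ may run through several blocks, or through vertices you have thrown away, and nothing in a generic block-decomposition or degeneracy pruning guarantees $ec(H)\geq ec(G)-K$. The actual argument in \cite{GHS02} does not reduce to a single $2$-connected piece first and then compare cycle lengths afterwards; rather, it works directly with a longest cycle of the given parity in $G$ and builds, inside $G$, an auxiliary structure (paths of all lengths in an interval between two specified vertices on that cycle) that lets one shorten the cycle in steps of two. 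Your ``flexibility lemma'' paragraph is close to this idea, but the rotation argument you describe does not by itself produce an interval of \emph{consecutive} path lengths; securing consecutiveness is exactly the technical content of the Gould--Haxell--Scott lemma, and it requires more than P\'osa rotation on a single path.
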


Now we give the proof of Theorem \ref{Thm:SpectraConsecutiveCycles}.

\noindent
{\bf Proof of Theorem \ref{Thm:SpectraConsecutiveCycles}.}
If $G$ is disconnected, for example, $G$ contains 
$t$ components, then we can add $t-1$ edges to 
make it connected and 1-edge-connected, i.e., each
new edge is a edge-cut of the new graph $G'$. 
Note that $\rho(G')\geq \rho(G)$.
For any integer $k\geq 3$, $G$ contains a cycle 
of length $k$ if and only if $G'$ contains a cycle
of length $k$. Thus, we can assume that $G$ is connected.

By Theorem \ref{Thm:Hong}, we have
$$\frac{n^2-1}{4}\leq \rho^2(T_{n,2})<\rho^2(G)\leq
2m-n+1.$$ One can compute that
$2m\geq\frac{n^2+4n+3}{4}$. Thus, the average degree
$d(G):=\frac{2m}{n}>\frac{n}{4}$.

Let $H$ be a subgraph of $G$ defined by a sequence of graphs
$G_0,G_1,\ldots,G_k$ such that:\\
(1) $G=G_0$, $H=G_k$;\\
(2) for every $i\in[0,k-1]$, there is $v_i\in V(G_i)$ such that
$d_{G_i}(v_i)\leq\frac{n}{8}$ and $G_{i+1}=G_i-v_i$;\\
(3) for every $v\in V(G_k)$, $d_{G_k}(v)>\frac{n}{8}$.\\
We claim that $d(H)>\frac{n}{4}$. Suppose not the case. Then there
is a smallest $i\in[1,k]$ with $d(G_i)\leq\frac{n}{4}$. This implies
that
$$d(G_{i-1})=\frac{2d(v_{i-1})+|G_i|d(G_i)}{|G_i|+1}\leq\frac{n}{4},$$
a contradiction. Thus, we conclude that $d(H)>\frac{n}{4}$ and
$\delta(H)>\frac{n}{8}$.

\underline{Case A: Even cycle.} Note that
$e(H)=\frac{d(H)|H|}{2}>\frac{\frac{n}{4}(|H|-1)}{2}$. By Theorem
\ref{Thm:Erdos-Gall-Voss}, $ec(H)>\frac{n}{4}$. Recall that
$\delta(H)>\frac{n}{8}$. By Theorem \ref{Thm:GHS}, $H$ contains all
even cycles $C_{\ell}$ with $\ell \in [4,ec(G)-K]$ if
$|H|\geq45\cdot8^4\cdot K$, where $K=K(\frac{1}{8})=\frac{7.5\times 10^{5}}{(\frac{1}{8})^5}$ be the constant in
Theorem \ref{Thm:GHS}. Clearly $|H|>\frac{n}{4}$. Let $n_1$ be an
integer satisfying
$$(i)\ \frac{n_1}{4}\geq 45\cdot8^4\cdot K;\  (ii)\ \varepsilon n_1\geq K.$$
Now if $n\geq \max\{1.9\times 10^{16},\frac{2.5\times 10^{10}}{\varepsilon}\}$,
then $G$ contains all even cycles $C_{\ell}$
with $\ell \in [4,(\frac{1}{4}-\varepsilon)n]$.

\underline{Case B: Odd cycle.} Set $h=|H|$. By Lemma
\ref{Lem:Sun-Das}, we have
$$
\rho^2(G)\leq\rho^2(H)+2\sum_{i=0}^{k-1}d_{G_i}(v_i)\leq\rho^2(H)+2k\cdot\frac{n}{8}=\rho^2(H)+\frac{kn}{4},
$$
where $G_i$, $v_i$ are those in the definition of $H$, and $k=n-h$.
This implies that $\rho(H)\geq\frac{\sqrt{nh-1}}{2}$. Since
$\rho(G)>\sqrt{\lfloor\frac{n^2}{4}\rfloor}$, by Nosal's theorem \cite{N70}
and Mantel's theorem, $G$ contains a triangle, and so is
non-bipartite. If $h<n$, then
$\rho(H)>\frac{\sqrt{nh-1}}{2}\geq\sqrt{\lfloor\frac{h^2}{4}\rfloor}$,
and $H$ is non-bipartite as well. In any case we infer $H$ is
non-bipartite.

Let $F$ be a subgraph of $H$ defined by a sequence of graphs
$H_0,H_1,\ldots,H_k$ such that:\\
(1) $H=H_0$, $F=H_k$;\\
(2) for every $i\in[0,k-1]$, there is a cut-vertex $v_i$ of $H_i$ and $H_{i+1}=H_i-v_i$;\\
(3) $H_k$ has no cut-vertex.\\
Note that the component number $w(H_{i+1})\geq w(H_i)+1$. Clearly
$w(H)\leq 8$, for otherwise $H$ will have a vertex of degree less
than $\frac{n}{8}$. We claim that $w(F)\leq 8$. Suppose to the
contrary that there is a smallest $i$ with $w(H_i)\geq 9$. Notice that
$i\leq 8$, implying that $\delta(H_i)>\frac{n}{8}-8$. As $w(H_i)\geq
9$, $H_i$ has a vertex with degree less that
$\frac{|H_i|}{9}<\frac{n}{9}$, a contradiction when $n\geq577$. Thus
we conclude that $w(F)\leq 8$, and specially, $v(F)\geq h-7$.

By Lemma \ref{Lem:Sun-Das}, we have
$$
\rho^2(H)\leq\rho^2(F)+2\sum_{i=0}^{k-1}d_{H_i}(v_i)\leq\rho^2(F)+2k(h-1)\leq\rho^2(F)+14(h-1).
$$
Since $d(H)>\frac{n}{4}$, we obtain $h>\frac{n}{4}+1$. Thus,
\begin{equation*}
\begin{split}
\rho(F) & \geq\sqrt{\rho^2(H)-14(h-1)}\geq\sqrt{\frac{nh-1}{4}-14(h-1)}\\
        & =\sqrt{\left(\frac{n}{4}-14\right)h+\frac{55}{4}}>\sqrt{\left(\frac{n}{4}-14\right)\left(\frac{n}{4}+1\right)+\frac{55}{4}}\geq\frac{n}{4}-7\\
\end{split}
\end{equation*}
when $n\geq85$.

Recall that $F$ has no cut-vertex, i.e., every component of $F$ is
2-connected. Let $F_1$ be a component of $F$ with
$\rho(F_1)=\rho(F)$. Thus we have $\delta(F_1)\geq\frac{n}{8}-7$ and
$\rho(F_1)>\frac{n}{4}-7$. Specially $|F_1|>\frac{n}{4}-6$.

We claim that $\delta(F_1)\geq\frac{|F_1|}{8}$. Recall that
$\delta(H)\geq\frac{n}{8}\geq\frac{|H|}{8}$, we assume that $F_1\neq
H$. This implies that $F$ has a second component $F_2$. Since
$\delta(F)\geq\frac{n}{8}-k$, we have $|F_2|\geq\frac{n}{8}-k+1$
(here $k$ is that in definition of $F$). This implies that
$|F_1|\leq h-k-(\frac{n}{8}-k+1)<\frac{7h}{8}$. Thus
$\delta(F_1)\geq\frac{n}{8}-7\geq\frac{7n/8}{8}\geq\frac{|F_1|}{8}$
when $n\geq 448$.

Now we show that $F_1$ is non-bipartite. Recall that $H$ is
non-bipartite. So we assume that $F_1\neq H$. By the analysis above
we have $|F_1|<\frac{7h}{8}$. Thus
$$\rho^2(F_1)=\rho^2(F)\geq\left(\frac{n}{4}-14\right)h+\frac{55}{4}\geq\left(\frac{h}{4}-14\right)h+\frac{55}{4}>\frac{(7h/8)^2}{4}>\frac{|F_1|^2}{4}$$
when $h\geq 238$. Since $h>\frac{n}{4}+1$, we have that $F$ is
non-bipartite when $n\geq 944$.

By Theorem \ref{Thm:Voss-Zuluage}, $oc(F_1)\geq
2\delta(F_1)-1\geq\frac{n}{4}-15$. By Theorem \ref{Thm:GHS}, $F_1$
contains all odd cycles $C_{\ell}$ for
$\ell\in[K,\frac{n}{4}-15-K]$, where $K=K(\frac{1}{8})$ is the
constant as in Theorem \ref{Thm:GHS}. A theorem of
Nikiforov \cite[Theorem~1]{N08}\footnote{By refining the proof, one can let $N=8400$.}
states that there exists a sufficiently large $N$
such that any graph of order $n\geq N$ has a cycle of length $\ell$ for every
$\ell \in[3,\frac{n}{320}]$. Let
$n_2$ be an integer such that
$$(i)\ n_2\geq\max\{944,N\};\ (ii)\ \frac{n_2}{320}\geq K;\ (iii)\
\varepsilon n_2\geq K+15.$$
We only need $n_2\geq \max\{N, 7.9\times 10^{12},\frac{2.5\times 10^{10}}{\varepsilon}\}$.
Now if $n\geq\max\{n_1, n_2\}$, then $G$
contains all cycles $C_{\ell}$ with $\ell \in
[3,(\frac{1}{4}-\varepsilon)n]$.

The proof is complete. \hfill{\rule{4pt}{8pt}}

\section{A concluding  remark}

Nikiforov \cite{N10} proposed two nice conjectures on cycles of small lengths.
He conjectured that:  (a)  every graph on sufficiently large order $n$
contains a $C_{2k+1}$ or a $C_{2k+2}$ if $\rho(G)\geq \rho(S_{n,k})$,
unless $G=S_{n,k}$ where $S_{n,k}:=K_k\vee (n-k)K_1$; and (b) every graph on sufficiently large order $n$
contains a $C_{2k+2}$ if $\rho(G)\geq \rho(S^+_{n,k})$,
unless $G=S^+_{n,k}$ where $S^+_{n,k}$ is obtained from $S_{n,k}$
by adding an edge in the $n-k$ isolated vertices. One can easily compute
that $\rho(S_{n,k})=\Omega (\sqrt{n})$ and $\rho(S^+_{n,k})=\Omega (\sqrt{n})$.
If these conjectures will be confirmed, then we maybe obtain tight
spectral conditions for $C_{\ell}$ where $\ell \in [3,\Omega(\sqrt{n})]\cup [n-\Omega(\sqrt{n}),n]$.
It is mysterious to determine tight spectral conditions
for $C_{\ell}$, where $0<\lim\limits_{n\rightarrow \infty} \frac{\ell}{n}=c<1$,
such as $C_{\lfloor\frac{n}{2}\rfloor}$ and $C_{\lceil\frac{n}{2}\rceil}$
and etc.

\end{document}